\documentclass[12pt]{amsart}
\usepackage{amssymb,amsbsy}
\usepackage{amscd}
\usepackage{enumerate}
\usepackage{setspace}
\usepackage{tikz}
\usepackage{bm}
\usepackage{cite}
\usepackage{graphicx}
\usepackage{hyperref}
\usepackage{nicematrix}
\usepackage{colortbl}
\allowdisplaybreaks

\usepackage{kpfonts}
\usepackage{stackengine}
\usepackage{calc}
\newlength\shlength
\newcommand\xshlongvec[2][0]{\setlength\shlength{#1pt}%
  \stackengine{-5.6pt}{$#2$}{\smash{$\kern\shlength%
    \stackengine{7.55pt}{$\mathchar"017E$}%
      {\rule{\widthof{$#2$}}{.57pt}\kern.4pt}{O}{r}{F}{F}{L}\kern-\shlength$}}%
      {O}{c}{F}{T}{S}}
  
\setcounter{MaxMatrixCols}{20}

\usepackage[letterpaper]{geometry}
\geometry{top=1in, bottom=1in, left=1in, right=1in}

\theoremstyle{definition}
\newtheorem{theorem}{Theorem}[section]
\newtheorem{lemma}[theorem]{Lemma}
\newtheorem{proposition}[theorem]{Proposition}
\newtheorem*{proposition*}{Proposition}

\newtheorem{corollary}[theorem]{Corollary}

\newtheorem{example}[theorem]{Example}

\newcommand{\ovl}[1]{\overline{#1}}	
\newcommand{\parn}[1]{\left( #1 \right)}
\newcommand{\conv}[1]{\mathrm{conv}\parn{#1}}	



\newcommand{\Z}{{\mathbb{Z}}}
\newcommand{\R}{{\mathbb{R}}}
\newcommand{\C}{{\mathbb{C}}}
\newcommand{\N}{{\mathbb{N}}}

\pagenumbering{arabic}

\renewcommand{\Re}{\ensuremath{\mathrm{Re}}}
\renewcommand{\Im}{\ensuremath{\mathrm{Im}}}

\newcommand\ce{r}

\title[Numerical range of a tridiagonal operator]{The numerical range of some  periodic tridiagonal operators is the convex hull of the numerical ranges of two finite matrices}

\date{January 2021}

\author{Benjam\'in A.~Itz\'a-Ortiz}
\author{Rub\'en A.~Mart\'inez-Avenda\~no}
\author{Hiroshi Nakazato}

\address{Centro de Investigaci\'on en Matem\'aticas, Universidad Aut\'onoma del Estado de Hidalgo, Pachuca, Hidalgo, Mexico}
\address{Departamento Acad\'emico de Matem\'aticas, Instituto Tecnol\'ogico Aut\'onomo de M\'exico, Mexico City, Mexico}
\address{Department of Mathematics and Physics, Hirosaki University, Hirosaki City, Japan}

\thanks{The second author's research is partially supported by the Asociaci\'on Mexicana de Cultura A.C.}

\begin{document}
\setlength\arraycolsep{2pt}

\begin{abstract}
In this paper we  prove a conjecture stated by the first two authors establishing the closure of the numerical range of a certain class of $n+1$-periodic tridiagonal operators as the convex hull of the numerical ranges of two tridiagonal $(n+1) \times (n+1)$ matrices. Furthermore, when $n+1$ is odd, we show that the  size of such matrices simplifies to $\frac{n}{2}+1$.
\end{abstract}

\maketitle

\section*{Introduction}

Tridiagonal operators and matrices have long been of interest to researchers  for a  wide variety of reasons, such as solving linear systems of equations, applications of finite differences for the numerical solution of differential equations, and finding roots of polynomials, just to name a few. 
They also have found applications in Physics. For example, a tridiagonal operator is used as the
  ``hopping sign model'' introduced in \cite{Feinberg} and  studied by many other authors, such as in \cite{BebianoEtAl,CWChLi10,
CWChLi13,ChD,HaggerJFA, Hagger,HI-O2016}. 
Though it may seem natural to try finding the numerical range of arbitrary tridiagonal operators and matrices, this turns out to be a hard problem. However, under specific additional conditions, some progress has been made \cite{CS,HI-O2016,IM,IMN}.
In \cite{BebianoEtAl}, Bebiano et al.~introduced the symbol matrix of a biperiodic banded operator and 
expressed the closure of the numerical range of such an operator as the convex hull of the union of the numerical ranges of an infinite number of symbol matrices.
In \cite{IM}, the first two authors of this paper defined the symbol matrix of an $(n+1)$-periodic tridiagonal operator and showed that the closure of the numerical range of such an operator is also the convex hull of the union of the numerical ranges of an infinite number of symbol matrices. They also provided an expression for the closure of the numerical range of a specific tridiagonal operator as the convex hull of the numerical ranges of two concrete matrices. In \cite{IMN} it is shown that the
 closure of the numerical range of an $(n+1)$-periodic tridiagonal operator may be expressed as the numerical range of a single but abstract $2(n+1)\times
 2(n+1)$ matrix. In this paper, we find explicit expressions for the matrix above when the tridiagonal operator satisfies certain conditions.
 
 Specifically, for a fixed natural number $n$, consider the $(n+1)$-periodic sequence of real numbers $a=\cdots a_{-1} a_0 a_1 a_2 \cdots$; that is, $a_k=a_{k+n+1}$ for every integer $k$. We define $A_a$ to be the bounded operator with infinite matrix
\begin{equation}\label{eq:Ab}
 A_{a}= 
\begin{pmatrix}
\ddots & \ddots & & & & & \\
\ddots & 0 & 1 & & & & \\
& a_{-2} & 0 & 1 & & & \\
& & a_{-1} & \framebox[0.4cm][l]{0} & 1 & & \\
& & & a_{0} & 0 & 1 & \\
& & & & a_{1} & 0 & \ddots \\
& & & & & \ddots & \ddots
\end{pmatrix}.
\end{equation}
We  prove that when $a_1=1$ and the finite sequence $a_2a_3\cdots a_na_0$ is palindromic, that is $a_2a_3\cdots a_na_0=a_0a_n \cdots a_3 a_2$, then
 the closure of the numerical range of $A_a$ is the convex hull of two $(n+1)\times (n+1)$ tridiagonal matrices. This, in particular, proves 
Conjecture~3.7 in \cite{IM}.  

We divide this work in three  sections. In Section~1 we briefly introduce the notation and terminologies needed in the rest of the paper. In Section~2
 the confirmation of Conjecture~3.7 in \cite{IM}
 is presented, among other examples. And finally in Section~3, when $n+1$ is odd, a further simplification of the expression for the closure of numerical range of $T$ obtained in Section~2 is derived.

\section{Preliminaries}

In this section we present the notation and basic facts that we will use throughout this paper. The symbols $\N$, $\N_0$, $\Z$, $\R$ and $\C$ denote the set of positive integers, the sets of nonnegative integers, the set of integers, the set of real numbers and the set of complex numbers, respectively. Also, recall that if $\mathcal A$ is a nonempty set, ${\mathcal A}^{\N_0}$ denotes the set of sequences indexed by $\N_0$ with values on $\mathcal A$.

For a fixed $n\in\N$, let $a$, $b$ and $c$ be $(n+1)$-periodic infinite sequences in $\mathcal A^{\N_0}$. The  $(n+1)$-periodic tridiagonal operator on $\ell^2(\N_0)$ given by the infinite matrix
\[
\begin{pmatrix}
b_0 & c_0 & & & & & & & &\\
a_1 & b_1 & c_1 & & & & & & & \\
    & a_{2} & b_2 & c_2 & & & & & & \\
    &       & \ddots & \ddots  & \ddots & & & & & \\
    & & & a_{n} & b_n & c_n & & & &\\
    & & & & a_{0} & b_0 & c_0 & & &\\
    & & & & & \ddots & \ddots & \ddots & &\\
    & & & & & &a_{n-1} & b_{n-1}& c_{n-1} & \\
    & & & & & & &  a_n& b_n  & c_n &\\
    & & & & & & & & \ddots &  \ddots&\ddots
\end{pmatrix},
\]
will be denoted by $T=T(a,b,c)$.

Since the sum of the moduli of the entries in each column and each row is uniformly bounded (see, e.g., \cite[Example 2.3]{Kato}), the operator $T$ is bounded. Also, observe that the operator given by the biinfinite matrix $A_a$ is also a bounded operator, if the biinfinite sequence $a$ has finitely many values.

The numerical range of a bounded operator $A$ on a Hilbert space $\mathcal H$, with inner product denoted by $\left< \cdot , \cdot \right>$, is the set of complex numbers given by 
\[
W(A)= \{ \left< A x, x \right> \, : \, \| x \|=1 \}.
\]
This set is a bounded convex subset of $\C$, by the Toeplitz--Hausdorff Theorem, and it is closed if the Hilbert space is finite dimensional. It follows from the convexity that the closure of the numerical range can be seen as the intersection of the closed half-planes containing $W(A)$.

In \cite{Kip} (see also \cite{ZH}), R.~Kippenhahn showed that, for a square matrix $A$, the vertical lines $\Re(z)=\lambda_1(A)$ and $\Re(z)=\lambda_n(A)$ are support lines of $W(A)$. Here $\lambda_1(A)$ and $\lambda_n(A)$ are the largest and smallest eigenvalues of $\Re(A)$, respectively. (Recall that $\Re(A):=\frac{1}{2} (A + A^*)$ and $\Im(A):=\frac{1}{2i} (A - A^*)$.) In fact, he showed that if $\alpha \in W(A)$ then  $\lambda_n(A) \leq \Re(\alpha) \leq \lambda_1(A)$ and the equalities hold for some points $\alpha_1, \alpha_2 \in W(A)$). 

It is clear that for each $\theta \in [0, 2\pi)$ we have $e^{i \theta} W(A)=W(e^{i \theta} A)$. Now, if $\alpha \in W(A)$, then $e^{-i\theta} \alpha \in W(e^{-i\theta} A)$ and hence $\Re(e^{-i\theta} \alpha) \leq \lambda_1(e^{-i \theta} A)$. Therefore the lines $\Re(e^{-i\theta} z) = \lambda_1(e^{-i \theta} A)$ are support lines of $W(A)$. It follows that $W(A)$, being a convex set, is uniquely determined by the real numbers $\lambda_1(e^{-i\theta} A)$, as $\theta$ varies on the interval $[0, 2\pi)$. In other words, the largest eigenvalue of $\Re(e^{-i \theta} A)$, which equals $\cos(\theta) \Re(A) + \sin(\theta) \Im(A)$, determines the set $W(A)$. Therefore, the largest roots of the family of characteristic polynomials
\[
\det(t I_n - \cos(\theta) \Re(A) - \sin(\theta) \Im(A)),
\]
completely characterizes the convex set $W(A)$.

We can then define the Kippenhahn polynomial of $A$ as the homogeneous polynomial $F_A(t,x,y)=\det(t I_n + x \Re(A) + y \Im(A))$. It is then clear that two matrices have the same numerical range if their Kippenhahn polynomials coincide.  In fact,
\[
\max\{ t \in \R \, : \, F_A(t,-\cos(\theta),-\sin(\theta))=0 \} = \max\{ \Re(e^{-i \theta} z) \, : \, z \in W(A)\}
\]
for each $\theta \in [0,2\pi)$.

For the rest of this paper we will use the following notation. For $0 \leq j < n$ we define
 \begin{equation*}
     \alpha_j=\frac{c_j+\overline{a_{j+1}}}{2},\quad \gamma_j=\frac{c_j-\overline{a_{j+1}}}{2i},
     \end{equation*}
     and
\begin{equation*}     
     \alpha_n=\frac{a_0+\overline{c_n}}{2},\quad
     \gamma_n=\frac{a_0-\overline{c_n}}{2i}.
 \end{equation*}

The following Proposition, which was shown in \cite{IMN}, will be useful in what follows.

\begin{proposition}\label{prop:defP}
Let $n \in \N$. Suppose that $T(a, b, c)$ is an $n+1$-periodic tridiagonal operator acting on $\ell^2({\mathbb N}_0)$. Let $P$ be the real homogeneous polynomial of degree $2(n+1)$ given by
\begin{equation*}
   P(t, x, y)=\left( G_{n}(t, x, y)-|\alpha_n x +\gamma_n y|^2 H_{n}(t, x, y)\right)^2 
   -4 \prod_{j=0}^{n} \left|\alpha_j x +\gamma_j y\right|^2.
\end{equation*}
   Then
  \[
  \sup \left\{\Re(e^{-i \theta} z)\colon\right. \bigl. z \in W\left(T(a, b,c)\right)\bigr\} 
   =\max \{ t \in {\mathbb R}\colon P(t, -\cos \theta, -\sin \theta) =0 \},
  \]
  for each $\theta \in [0, 2\pi)$.

   Here $G_n(t,x,y)$ is the determinant of the tridiagonal $(n+1) \times (n+1)$ matrix
     \[
\begin{pmatrix}
    \lambda_{1,1} & \lambda_{1,2} & 0  & 0 &  \ldots &  0 &0 \\
    \lambda_{2,1} & \lambda_{2,2} & \lambda_{2,3} & 0  & \cdots  &0 &  0  \\
    0 & \lambda_{3,2} & \lambda_{3,3} & \lambda_{3,3}  & \cdots  & 0 &  0  \\
    0 & 0 & \lambda_{4,3} & \lambda_{4,4}  & \cdots  &  0  & 0\\
       \vdots & \vdots & \vdots & \vdots & &\vdots &\vdots  \\
       0  &  0  &   0 &  0 & \cdots&    \lambda_{n, n} & \lambda_{n, n+1} \\
       0  &  0  &   0 &  0 & \cdots &    \lambda_{n+1, n} & \lambda_{n+1, n+1} 
\end{pmatrix},
\]
and, where we set $H_n(t,x,y)=1$ when $n=1$, and, for $n\geq 2$, we set $H_n(t,x,y)$ to be the determinant of $(n-1) \times (n-1)$ tridiagonal matrix  
  \[
\begin{pmatrix}
      \lambda_{2,2} & \lambda_{2,3} & 0  &  \cdots & 0 & 0  \\
      \lambda_{3,2} & \lambda_{3,3} & \lambda_{3,4} &  \cdots  & 0 &  0  \\
      0  & \lambda_{4,3} & \lambda_{4,4} & \cdots  &  0 & 0\\
       \vdots & \vdots & \vdots & & \vdots & \vdots \\ 
       0  &  0 & 0  &  \cdots &    \lambda_{n-1, n-1} & \lambda_{n-1, n} \\
              0 & 0 &  0  &   \cdots &    \lambda_{n, n-1} & \lambda_{n, n} 
\end{pmatrix},
\]
and we have set, for $1 \leq j \leq n+1$,
\[
\lambda_{j,j}=t +\Re(b_{j-1}) x +\Im(b_{j-1}) y,
\]
and for $ 1 \leq j \leq n$, 
\[
\lambda_{j,j+1}=\alpha_{j-1} x + \gamma_{j-i} y \quad \text{ and} \quad \lambda_{j+1,j}=\ovl{\alpha_{j-1}} x + \ovl{\gamma_{j-1}} y.
\]
\end{proposition}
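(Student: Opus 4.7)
The plan is to reduce the statement to a finite-dimensional computation by combining the Kippenhahn characterization of support lines recalled in this section with the symbol-matrix decomposition of $\overline{W(T)}$. By the results of \cite{BebianoEtAl,IM}, the closure of the numerical range of $T(a,b,c)$ equals the closed convex hull of $\bigcup_{\phi\in[0,2\pi)}W(M_\phi)$, where $M_\phi$ is the $(n+1)\times(n+1)$ cyclic tridiagonal symbol matrix carrying one period of $a,b,c$ on its main and near-diagonals and having corner entries involving $a_0 e^{-i\phi}$ and $c_n e^{i\phi}$. Since a support function turns unions and closed convex hulls into suprema, and using that $\sup\{\Re(e^{-i\theta}z)\colon z\in W(A)\}$ equals the largest real $t$ with $F_A(t,-\cos\theta,-\sin\theta)=0$ for any finite matrix $A$, the claim reduces to proving
\begin{equation*}
\sup_{\phi\in[0,2\pi)}\max\{t\in\R\colon F_{M_\phi}(t,x,y)=0\}=\max\{t\in\R\colon P(t,x,y)=0\}
\end{equation*}
for every pair of real numbers $x,y$.

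The main computation I would carry out next is to expand $F_{M_\phi}(t,x,y)=\det(tI+x\Re(M_\phi)+y\Im(M_\phi))$. The matrix inside the determinant is cyclic tridiagonal with diagonal and near-diagonal entries equal to the $\lambda_{j,k}$ defined in the statement, and with two corner entries $e^{i\phi}\overline{(\alpha_n x+\gamma_n y)}$ and $e^{-i\phi}(\alpha_n x+\gamma_n y)$. Expanding the determinant as a signed sum over permutations, the nonzero contributions split into three groups: the permutations supported on the open tridiagonal, which give $G_n$; the transposition $(1,n+1)$ composed with arbitrary inner matchings on $\{2,\ldots,n\}$, which give $-\abs{\alpha_n x+\gamma_n y}^2 H_n$; and the two full $(n+1)$-cycles traversing the matrix in opposite directions, which together contribute $(-1)^{n}\cdot 2\Re\!\left[e^{-i\phi}\prod_{j=0}^{n}(\alpha_j x+\gamma_j y)\right]$. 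Assembling these gives $F_{M_\phi}(t,x,y)=G_n-\abs{\alpha_n x+\gamma_n y}^2 H_n+(-1)^{n}\cdot 2\Re[e^{-i\phi}\prod_{j=0}^{n}(\alpha_j x+\gamma_j y)]$.

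The final step is to take the supremum over $\phi$. For fixed $t,x,y$, the oscillatory term attains every value in the interval $[-2R,2R]$ as $\phi$ varies, where $R=\prod_{j=0}^{n}\abs{\alpha_j x+\gamma_j y}$. Consequently $F_{M_\phi}(t,x,y)=0$ has a solution in $\phi$ if and only if $\bigl(G_n-\abs{\alpha_n x+\gamma_n y}^2 H_n\bigr)^2\le 4R^2$, which is exactly $P(t,x,y)\le 0$. Since $P$ is a polynomial in $t$ of degree $2(n+1)$ with positive leading coefficient, its non-positivity set is bounded above by its largest real root, which is therefore the supremum being sought. Specializing $x=-\cos\theta$, $y=-\sin\theta$ gives the stated identity.

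The hardest step I anticipate is the cyclic-tridiagonal cofactor identity: one must carefully track signs and verify that the oscillatory contribution is $2\Re$ of the \emph{complete} product $\prod_{j=0}^{n}(\alpha_j x+\gamma_j y)$, coming from the two cycles that traverse every row and column exactly once rather than from shorter closed paths. Once that identity is in hand, the maximization over $\phi$ and the reading off of the largest root of $P$ are routine, and the passage from the finite matrices $M_\phi$ back to the infinite operator $T$ is supplied by the symbol-matrix decomposition of \cite{BebianoEtAl,IM}, which can be invoked as a black box.
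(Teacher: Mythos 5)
The paper does not actually prove this proposition itself --- it is imported verbatim from \cite{IMN} --- but your argument is correct and is precisely the derivation that the shape of $P=\bigl(G_n-|\alpha_n x+\gamma_n y|^2H_n\bigr)^2-4\prod_{j=0}^{n}|\alpha_j x+\gamma_j y|^2$ encodes: combine the symbol-matrix decomposition of $\overline{W(T)}$ from \cite{BebianoEtAl,IM} with the Kippenhahn support-line characterization, expand the cyclic tridiagonal determinant via the almost-tridiagonal identity (Lemma~\ref{le:laplace} of this paper) into $G_n-|\alpha_n x+\gamma_n y|^2H_n$ plus an oscillatory term of modulus at most $2\prod_{j=0}^{n}|\alpha_j x+\gamma_j y|$, and then eliminate $\phi$ to read off the largest root of $P$. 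The only imprecision is cosmetic: with the usual corner convention the two cycle terms contribute $(-1)^n\cdot 2\Re\bigl[e^{i\phi}\,\overline{(\alpha_n x+\gamma_n y)}\prod_{j=0}^{n-1}(\alpha_j x+\gamma_j y)\bigr]$ rather than $(-1)^n\cdot 2\Re\bigl[e^{-i\phi}\prod_{j=0}^{n}(\alpha_j x+\gamma_j y)\bigr]$, but since only the modulus of the product survives the supremum over $\phi$, this does not affect the conclusion.
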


We will need the following lemma.

\begin{lemma}\label{le:laplace}
Consider the $(n+1)\times(n+1)$ ``almost tridiagonal'' matrix
\[
U=\begin{pmatrix} 
u_{1,1} & u_{1,2} & 0 &  0 & \dots & 0 & 0 & u_{1,n+1} \\
u_{2,1} & u_{2,2} & u_{2,3} & 0 &  \dots & 0 & 0 & 0\\
0 & u_{3,2} & u_{3,3} & u_{3,4} & \dots & 0 & 0 & 0  \\
0 & 0 & u_{4,3} & u_{4,4} & \dots & 0 & 0 & 0 \\
\vdots & \vdots & \vdots & \vdots &\ddots &\vdots &\vdots &\vdots \\
0 & 0 & 0 & 0 & \dots & u_{n-1,n-1} & u_{n-1,n} & 0  \\
0 & 0 & 0 & 0 & \dots & u_{n,n-1} & u_{n,n} & u_{n,n+1}  \\
u_{n+1,1} & 0 & 0 & 0 & \dots & 0 & u_{n+1,n} & u_{n+1,n+1}  
\end{pmatrix},
\]
where every $u_{i,j} \in \C$. Then, $\det(U)$ equals
\begin{equation*}
    \begin{split}
&
\det\begin{pmatrix}
u_{1,1} & u_{1,2} & 0 &  \dots & 0 & 0 \\
u_{2,1} & u_{2,2} & u_{2,3} &  \dots & 0 & 0\\
0 & u_{3,2} & u_{3,3} &  \dots & 0 & 0  \\
\vdots & \vdots & \vdots &\ddots &\vdots &\vdots \\
0 & 0 & 0 & \dots & u_{n-1,n} & 0  \\
0 & 0 & 0 & \dots & u_{n,n} & u_{n,n+1}  \\
0 & 0 & 0 & \dots & u_{n+1,n} & u_{n+1,n+1}
\end{pmatrix} 
- u_{1,n+1} u_{n+1,1} 
\det \begin{pmatrix}
u_{2,2} & u_{2,3} & \dots & 0 & 0 \\
u_{3,2} & u_{3,3} &  \dots & 0 & 0 \\
\vdots & \vdots &\ddots &\vdots &\vdots  \\
0 & 0 & \dots & u_{n-1,n-1} & u_{n-1,n}  \\
0 & 0 & \dots & u_{n,n-1} & u_{n,n}  
\end{pmatrix} \\
\\
&+ (-1)^{n} u_{n+1,1} u_{1,2} u_{2,3} \cdots u_{n-1,n} u_{n,n+1} 
+ (-1)^{n}u_{1,n+1} u_{2,1} u_{3,2} \cdots u_{n,n-1} u_{n+1,n}.
\end{split}
\end{equation*}
\end{lemma}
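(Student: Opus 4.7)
My plan is to apply multilinearity of the determinant in the first and last rows of $U$ and then reduce each of the resulting four determinants by Laplace expansion.

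First I would split the first row of $U$ as the sum of $(u_{1,1}, u_{1,2}, 0, \ldots, 0)$ and $(0, \ldots, 0, u_{1,n+1})$, and similarly split the last row as $(0, \ldots, 0, u_{n+1,n}, u_{n+1,n+1})$ plus $(u_{n+1,1}, 0, \ldots, 0)$. By multilinearity, $\det(U)$ becomes a sum of four determinants $\det V_{11} + \det V_{12} + \det V_{21} + \det V_{22}$, indexed by whether each of the two rows contributes its ``tridiagonal'' half or its ``corner'' half. By construction $V_{11}$ coincides with the $(n+1)\times(n+1)$ tridiagonal matrix on the right-hand side of the statement, contributing the first term.

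Next I would treat $V_{12}$ (tridiagonal first row and $(u_{n+1,1},0,\ldots,0)$ as its last row). Expansion along the last row isolates the single entry $u_{n+1,1}$ with cofactor sign $(-1)^{(n+1)+1}=(-1)^n$. The resulting $n\times n$ minor, obtained by deleting the last row and first column of $V_{12}$, turns out to be lower triangular: its row $i$ has nonzero entries only in columns $\max(1,i-2)$ through $i$, and its diagonal entries are precisely $u_{1,2}, u_{2,3}, \ldots, u_{n,n+1}$. This produces the third term of the right-hand side. Symmetrically, for $V_{21}$ an expansion along the first row leaves an upper triangular minor whose diagonal is $u_{2,1}, u_{3,2}, \ldots, u_{n+1,n}$, yielding the fourth term.

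The remaining matrix $V_{22}$ has both corners present but nothing else in its first or last rows. I would expand it twice: first along the first row, isolating $u_{1,n+1}$ with sign $(-1)^{1+(n+1)}$, and then along the last row of the resulting $n \times n$ minor, isolating $u_{n+1,1}$ with sign $(-1)^{n+1}$. After both expansions, the surviving $(n-1)\times(n-1)$ block is exactly the inner tridiagonal matrix $(u_{i,j})_{2\le i,j\le n}$ displayed in the statement, and the two Laplace signs combine to $(-1)^{(n+2)+(n+1)}=-1$, producing the second term. Summing the four contributions gives the claimed identity. The plan involves no genuine obstacle; the only point requiring care is the index/sign bookkeeping in Steps 2--4, namely confirming that the minors really are triangular with the advertised diagonals and that $(-1)^{n+2}=(-1)^n$ and $(-1)^{2n+3}=-1$ combine as stated.
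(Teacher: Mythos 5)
Your proposal is correct and follows exactly the route the paper indicates for this lemma (successive applications of multilinearity and Laplace expansion), merely supplying the details the authors leave to the reader; the four-term splitting, the triangular minors, and the sign bookkeeping $(-1)^{n+2}=(-1)^n$ and $(-1)^{2n+3}=-1$ all check out.
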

\begin{proof}
This lemma can be proved by succesive applications of the multilinearity of the determinant function and the Laplace Expansion Theorem. We leave the long, but straightforward proof, to the reader.
\end{proof}

\section{Proof of the conjecture} 

 We shall prove that Conjecture 3.7 of \cite{IM} is true by using the polynomial given in Proposition~\ref{prop:defP}. We first state some lemmas which will be useful.
  
    \begin{lemma}\label{le:Apm}
Let $n \in \N$, $n \geq 2$, let $\ce_0, \ce_1, \ce_2, \dots \ce_n \in \C$ and consider the two $(n+1)\times (n+1)$ matrices
\begin{equation}\label{eq:Apm}
\begin{pmatrix}
t & \ce_0 & 0 & 0 & 0 & 0 & \cdots &0  & 0 & \pm \ce_n \\
\ce_0 & t & \ce_1 & 0 & 0 & 0 & \cdots &0  & 0 & 0 \\
0 & \ce_1 & t & \ce_2 & 0  & 0 & \cdots &0  & 0 & 0 \\
0 & 0 & \ce_2 & t & \ce_3 & 0 & \cdots &0  & 0 & 0 \\
0 & 0 & 0 & \ce_3 & t & \ce_4 & \cdots &0  & 0 & 0 \\
0 & 0 & 0 & 0 & \ce_4 & t & \cdots &0  & 0 & 0 \\
\vdots & \vdots & \vdots & \vdots & \vdots & \vdots & \ddots & \vdots & \vdots & \vdots \\
0 & 0 & 0 & 0 & 0 & 0  & \cdots & t & \ce_{n-2} & 0 \\
0 & 0 & 0 & 0 & 0 & 0  & \cdots & \ce_{n-2 }& t & \ce_{n-1} \\
\pm\ce_n & 0 & 0 & 0 & 0 & 0  & \cdots & 0 & \ce_{n-1} & t \end{pmatrix}.
\end{equation}
\begin{itemize} 
\item If $n+1$ is odd, assume that $\ce_j=\ce_{n-j+1}$ for $j=1, 2, \dots, \frac{n}{2}$ (we make no assumption on $\ce_0$).
Then the determinant of each of the matrices in \eqref{eq:Apm} equals the product
\begin{equation*}
\det\begin{pmatrix}
t\pm \ce_0 & \ce_1 & 0 & \dots & 0 & 0 & 0 \\
\ce_1 & t & \ce_2 & \dots & 0 & 0 & 0 &  \\
0 & \ce_2 & t & \dots  & 0  & 0 & 0  \\
\vdots & \vdots & \vdots & \ddots & \vdots & \vdots & \vdots \\
0 & 0 & 0 & \dots & t & \ce_{\frac{n}{2}-1} & 0 & \\
0 & 0 & 0 & \dots & \ce_{\frac{n}{2}-1} & t  &   \ce_{\frac{n}{2}} \\
0 & 0 & 0 & \dots & 0 & 2 \ce_{\frac{n}{2}} & t \\
\end{pmatrix}
\det\begin{pmatrix}
t\mp \ce_0 &\ce_{1} & 0 & \cdots & 0 & 0 \\
\ce_{1} & t & \ce_{2} & \cdots  & 0 & 0 \\
0 &  \ce_{2} & t & \cdots  & 0 & 0 \\
\vdots & \vdots & \vdots & \ddots & \vdots & \vdots \\
0 & 0 & 0 & \cdots & t & \ce_{\frac{n}{2}-1} \\
0 & 0 & 0 & \cdots & \ce_{\frac{n}{2}-1} & t 
\end{pmatrix}.
\end{equation*}
Observe that, if $n+1=3$, the matrix in the determinant in the right-hand-side of the expression above is the $1\times 1$ matrix $(t \mp \ce_0)$. 

\item If $n+1$ is even, assume that $\ce_j=\ce_{n-j+1}$ for $j=1, 2, \dots, \frac{n-1}{2}$ (we make no assumption on $\ce_0$, nor on $\ce_{\frac{n+1}{2}}$). 
Then the determinant of each of the matrices in \eqref{eq:Apm} equals the product
\begin{equation*}
    \det\begin{pmatrix}
t\pm\ce_0 & \ce_1 & 0 & \dots & 0 & 0 & 0 \\
\ce_1 & t & \ce_2 & \dots & 0 & 0 & 0 &  \\
0 & \ce_2
& t & \dots  & 0  & 0 & 0  \\
\vdots & \vdots & \vdots & \ddots & \vdots & \vdots & \vdots \\
0 & 0 & 0 & \dots & t & \ce_{\frac{n+1}{2}-2} & 0 & \\
0 & 0 & 0 & \dots & \ce_{\frac{n+1}{2}-2} & t  &   \ce_{\frac{n+1}{2}-1} \\
0 & 0 & 0 & \dots & 0 &  \ce_{\frac{n+1}{2}-1} & t+\ce_{\frac{n+1}{2}} \\
\end{pmatrix}
\det\begin{pmatrix}
t\mp\ce_0 & \ce_{1} & 0 & \dots & 0 & 0 & 0 \\
\ce_{1} & t & \ce_2 & \dots & 0 & 0 & 0 &  \\
0 & \ce_2
& t & \dots  & 0  & 0 & 0  \\
\vdots & \vdots & \vdots & \ddots & \vdots & \vdots & \vdots \\
0 & 0 & 0 & \dots & t & \ce_{\frac{n+1}{2}-2} & 0 & \\
0 & 0 & 0 & \dots & \ce_{\frac{n+1}{2}-2} & t  &   \ce_{\frac{n+1}{2}-1} \\
0 & 0 & 0 & \dots & 0 &  \ce_{\frac{n+1}{2}} & t-\ce_{\frac{n+1}{2}}
\end{pmatrix}.
\end{equation*}
\end{itemize}
\end{lemma}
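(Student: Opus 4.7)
The plan is to establish the factorization first for $\Atcp$ via a reflection-symmetry block decomposition, and then to deduce the formula for $\Atcm$ from $\Atcp$ via a diagonal similarity.

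For $\Atcp$, I would introduce the involution $J : \C^{n+1} \to \C^{n+1}$ determined by $J e_1 = e_2$, $J e_2 = e_1$, and $J e_j = e_{n+4-j}$ for $3 \leq j \leq n+1$; geometrically, $J$ is the reflection of the underlying $(n+1)$-cycle that fixes the edge between vertices $1$ and $2$. A direct verification using the palindromic hypothesis $\ce_j = \ce_{n-j+1}$ shows that $J \Atcp J = \Atcp$, so $\Atcp$ commutes with $J$ and preserves its $\pm 1$-eigenspaces $V_\pm$. Orthogonal bases for these eigenspaces are readily exhibited: $V_+$ is spanned by $s_0 = (e_1+e_2)/\sqrt{2}$, the symmetric vectors $s_k = (e_{2+k}+e_{n+2-k})/\sqrt{2}$, and (when $n$ is even) the fixed vertex $e_{n/2+2}$; $V_-$ is spanned by the antisymmetric analogues $a_k$. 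The dimensions of $V_+$ and $V_-$ match exactly the sizes of the two tridiagonal factors displayed in the statement.

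I would then compute the action of $\Atcp$ on this adapted basis by expanding $\Atcp(e_{2+k} \pm e_{n+2-k})$ and using $\ce_{n+1-j} = \ce_j$ to regroup terms. Interior basis vectors yield the standard three-term recurrences $\Atcp s_k = \ce_k s_{k-1} + t s_k + \ce_{k+1} s_{k+1}$ (and similarly for $a_k$). At $k=0$, the corner entry $\ce_n$ combines with the $\ce_0$-entry to give the diagonal shifts $\Atcp s_0 = (t+\ce_0) s_0 + \ce_1 s_1$ and $\Atcp a_0 = (t-\ce_0) a_0 - \ce_1 a_1$. At the centre of the cycle a parity split occurs: for $n$ even, the fixed vertex produces a $\sqrt{2}$-factor on the last off-diagonal of the symmetric block, whose square $2\ce_{n/2}^2$ matches the $(\ce_{n/2},\, 2\ce_{n/2})$ off-diagonal product in the lemma; for $n$ odd, the central weight $\ce_{(n+1)/2}$ appears as a diagonal shift $+\ce_{(n+1)/2}$ at the bottom-right of the symmetric block and $-\ce_{(n+1)/2}$ at the bottom-right of the antisymmetric block. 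Since symmetric tridiagonal determinants depend only on the diagonal entries and on the products of matched off-diagonals, the two computed blocks are identified with $p_+(t)$ and $q_-(t)$, yielding $\det \Atcp = p_+(t) \cdot q_-(t)$.

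To handle $\Atcm$, I would introduce the diagonal similarity $D = \mathrm{diag}(1, -1, -1, \dots, -1)$. A direct computation shows that $D \Atcm D$ is the matrix obtained from $\Atcp$ by replacing $\ce_0$ with $-\ce_0$ (all other $\ce_j$ being preserved). Since $D^2 = I$ and the palindromic hypothesis involves only $\ce_1, \dots, \ce_n$, the argument above applied to $D \Atcm D$ yields $\det \Atcm = p_+(t)|_{\ce_0 \mapsto -\ce_0} \cdot q_-(t)|_{\ce_0 \mapsto -\ce_0} = p_-(t) \cdot q_+(t)$. The main obstacle is the careful bookkeeping at the two boundaries of each block: the top corner, where $\pm\ce_n$ interacts with $\ce_0$, and the centre of the cycle, whose structure (a fixed vertex versus a central edge) differs between the even and odd $n$ cases; matching the resulting block matrices exactly with the precise tridiagonal matrices displayed in the lemma, up to the basis rescalings and sign flips that preserve the determinant, is the most delicate step of the argument.
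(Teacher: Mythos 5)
Your argument is correct, and it rests on the same underlying $\mathbb{Z}/2$-symmetry that the paper exploits, but it is packaged genuinely differently. The paper's proof conjugates each matrix in \eqref{eq:Apm} by a (signed) cyclic permutation $Q_{\pm}$ so that it becomes centrosymmetric and then quotes Aitken's two-block determinant formula for centrosymmetric matrices, deferring all details to a companion preprint; you instead keep the matrix as it stands, exhibit the reflection $J$ of the weighted $(n+1)$-cycle fixing the edge $\{1,2\}$ (commuting with $J$ is exactly centrosymmetry of the conjugated matrix, since $J$ is a $Q_+$-conjugate of the exchange matrix), and carry out the symmetric/antisymmetric block decomposition explicitly. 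This makes the proof self-contained at the cost of the boundary bookkeeping you rightly flag, all of which checks out: the hypothesis gives $\ce_n=\ce_1$, which is what makes $J$ an automorphism of the weighted cycle; the fixed vertex in the odd case produces the $\sqrt2$ off-diagonal whose matched product $2\ce_{n/2}^2$ agrees with the $(\ce_{n/2},\,2\ce_{n/2})$ pair in the displayed factor; and the central fixed edge in the even case produces the diagonal shifts $t\pm\ce_{\frac{n+1}{2}}$. Your treatment of $A^-$ via the diagonal similarity $D=\mathrm{diag}(1,-1,\dots,-1)$, which carries $A^-$ to $A^+$ with $\ce_0$ replaced by $-\ce_0$, is also different from (and arguably slicker than) the paper's use of the signed permutation $Q_-$, and it correctly explains why the two factors simply exchange the sign in front of $\ce_0$. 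One cosmetic remark: your computation produces the symmetric form of the last displayed factor, with $\ce_{\frac{n+1}{2}-1}$ in both off-corner positions; the entry $\ce_{\frac{n+1}{2}}$ in the lower-left corner of that factor as printed in the statement is evidently a typo, and your version is the intended one.
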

\begin{proof}
Let $Q_{\pm}$ be the matrices
\[
\begin{pmatrix}
0 & 1 & 0 & \dots & 0 & 0 \\
0 & 0 & 1 &  \dots & 0 & 0 \\
0 & 0 & 0 &  \dots & 0 & 0\\
\vdots & \vdots & \vdots & \ddots & \vdots & \vdots \\
0 & 0 & 0  & \dots & 0 & 1 \\
\pm 1 & 0 & 0 &  \dots & 0 & 0\\
\end{pmatrix}.
\]
For each of the matrices in \eqref{eq:Apm}, we left-multiply by $Q_{+}$ (respectively, $Q_{-}$) and right-multiply by $Q_{+}^{-1}$ (respectively, $Q_{-}^{-1}$) to obtain two matrices which are centrosymmetric (see, e.g., \cite{Aitken}, for the definition). It is known that the determinant of a centrosymmetryc matrix can be written as the product of the determinants of two smaller matrices (see, \cite[p.~125, eq. (9)]{Aitken}) from which the result follows. (See \cite{IMN_lemmas} for the full details of the proof.) 
\end{proof}

\begin{lemma}\label{le:Bpm}
Let $n \in \N$, $n \geq 2$, let $\ce_0, \ce_1, \ce_2, \dots \ce_n \in \C$ and consider the two $(n+1)\times (n+1)$ matrices
\begin{equation}\label{eq:Bpm}
\begin{pmatrix}
t\pm \ce_0 & \ce_1 & 0 & 0 & 0 & 0 & \cdots &0  & 0 & 0 \\
\ce_1 & t & \ce_2 & 0 & 0 & 0 & \cdots &0  & 0 & 0 \\
0 & \ce_2 & t & \ce_3 & 0  & 0 & \cdots &0  & 0 & 0 \\
0 & 0 & \ce_3 & t & \ce_4 & 0 & \cdots &0  & 0 & 0 \\
0 & 0 & 0 & \ce_4 & t & \ce_5 & \cdots &0  & 0 & 0 \\
0 & 0 & 0 & 0 & \ce_5 & t & \cdots &0  & 0 & 0 \\
\vdots & \vdots & \vdots & \vdots & \vdots & \vdots & \ddots & \vdots & \vdots & \vdots \\
0 & 0 & 0 & 0 & 0 & 0  & \cdots & t & \ce_{n-1} & 0 \\
0 & 0 & 0 & 0 & 0 & 0  & \cdots & \ce_{n-1 }& t & \ce_{n} \\
0 & 0 & 0 & 0 & 0 & 0  & \cdots & 0 & \ce_{n} & t\pm \ce_0 
\end{pmatrix}.
\end{equation}
\begin{itemize}
    \item If $n+1$ is odd, assume that $\ce_j=\ce_{n-j+1}$ for $j=1, 2, \dots, \frac{n}{2}$ (we make no assumption on $\ce_0$). Then the determinant of each of the matrices in \eqref{eq:Bpm} equals the product
\begin{equation*}
    \det\begin{pmatrix}
    t\pm\ce_0 & \ce_1 & 0 & \cdots & 0 & 0 \\
    \ce_1 & t & \ce_2 & \cdots & 0 & 0 \\
    0 & \ce_2 & t & \cdots & 0 & 0  \\
    \vdots & \vdots & \vdots & \ddots  &\vdots & \vdots \\
    0 & 0 & 0 & \cdots & t & \ce_{\frac{n}{2}-1} \\
    0 & 0 & 0 & \cdots &  \ce_{\frac{n}{2}-1} & t\\
    \end{pmatrix}
    \det\begin{pmatrix}
    t\pm\ce_0 & \ce_1 & 0 & \cdots & 0 & 0 & 0\\
    \ce_1 & t & \ce_2 & \cdots & 0 & 0 & 0\\
    0 & \ce_2 & t & \cdots & 0 & 0 & 0 \\
    \vdots & \vdots & \vdots & \ddots  &\vdots & \vdots & \vdots\\
    0 & 0 & 0 & \cdots & t & \ce_{\frac{n}{2}-1} & 0\\
    0 & 0 & 0 & \cdots &  \ce_{\frac{n}{2}-1} & t & \ce_\frac{n}{2}\\
    0 & 0 & 0 & \cdots & 0 & 2 \ce_{\frac{n}{2}} & t\\
    \end{pmatrix}.
\end{equation*}
Observe that, if $n+1=3$, the matrix in the determinant in the left-hand-side of the expression above is the $1\times 1$ matrix $(t \pm \ce_0)$.

\item If $n+1$ is even, assume that $\ce_j=\ce_{n-j+1}$ for $j=1, 2, \dots, \frac{n-1}{2}$ (we make no assumption on $\ce_0$, nor on $\ce_{\frac{n+1}{2}}$). Then the determinant of each of the matrices in \eqref{eq:Bpm} equals the product
\[   
\det\begin{pmatrix}
    t\pm\ce_0 & \ce_1 & 0 & \cdots & 0 & 0 \\
    \ce_1 & t & \ce_2 & \cdots & 0 & 0 \\
    0 & \ce_2 & t & \cdots & 0 & 0  \\
    \vdots & \vdots & \vdots & \ddots  &\vdots & \vdots \\
    0 & 0 & 0 & \cdots & t & \ce_{\frac{n-1}{2}} \\
    0 & 0 & 0 & \cdots &  \ce_{\frac{n-1}{2}} & t+\ce_{\frac{n+1}{2}}\\
    \end{pmatrix}
        \det\begin{pmatrix}
    t\pm\ce_0 & \ce_1 & 0 & \cdots & 0 & 0 \\
    \ce_1 & t & \ce_2 & \cdots & 0 & 0 \\
    0 & \ce_2 & t & \cdots & 0 & 0  \\
    \vdots & \vdots & \vdots & \ddots  &\vdots & \vdots \\
    0 & 0 & 0 & \cdots & t & \ce_{\frac{n-1}{2}} \\
    0 & 0 & 0 & \cdots &  \ce_{\frac{n-1}{2}} & t- \ce_{\frac{n+1}{2}}\\
    \end{pmatrix}.
    \]
    \end{itemize}
\end{lemma}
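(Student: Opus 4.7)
\emph{Proof proposal.} In contrast with Lemma~\ref{le:Apm}, the matrices in \eqref{eq:Bpm} are already centrosymmetric under the palindromic hypothesis: their diagonals $(t\pm\ce_0,t,t,\dots,t,t\pm\ce_0)$ are symmetric about the center (both corners carry the same $\pm\ce_0$), the corner positions $(1,n+1)$ and $(n+1,1)$ are zero, and the off-diagonal sequence $(\ce_1,\dots,\ce_n)$ is palindromic precisely because $\ce_j=\ce_{n-j+1}$. No preliminary conjugation by a cyclic-shift matrix $Q_\pm$ is therefore required, and one can apply Aitken's factorization formula \cite[p.~125, eq.~(9)]{Aitken} for the determinant of a centrosymmetric matrix directly to $B_\pm$.

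The plan is to block-diagonalize $B_\pm$ by the standard orthogonal change of basis that diagonalizes the flip operator $J$: this is $\frac{1}{\sqrt 2}\begin{pmatrix} I_k & J_k \\ -J_k & I_k \end{pmatrix}$ in the even case $n+1=2k$, with its analogue that adjoins a separate coordinate for the fixed middle basis vector $e_{k+1}$ in the odd case $n+1=2k+1$. In the even case, writing $B_\pm=\begin{pmatrix} A & B \\ J_kBJ_k & J_kAJ_k\end{pmatrix}$, the block diagonalization produces the two blocks $A-BJ_k$ and $A+BJ_k$ (up to a harmless additional conjugation by $J_k$). Here $A$ is the leading principal $k\times k$ submatrix of $B_\pm$, and $B=\ce_k\,e_ke_1^T$ records the unique off-diagonal entry of $B_\pm$ that crosses the midline, so $BJ_k=\ce_k\,e_ke_k^T$ simply adds $\pm\ce_k$ to the bottom-right diagonal entry of $A$. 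This matches the two matrices in the even-case statement, and the $\pm\ce_0$ at the top-left is preserved in both factors because it lives on the diagonal and commutes with $J_k$.

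In the odd case, the middle standard basis vector is fixed by $J$, producing $(+1)$- and $(-1)$-eigenspaces of dimensions $\frac{n}{2}+1$ and $\frac{n}{2}$ respectively, hence the asserted sizes of the two factors. A direct computation of the two blocks in the orthonormal basis $\{\tfrac{1}{\sqrt 2}(e_i\pm e_{n+2-i}):i=1,\dots,n/2\}\cup\{e_{(n/2)+1}\}$ shows that the antisymmetric block is exactly the claimed $\frac{n}{2}\times\frac{n}{2}$ tridiagonal matrix, while the symmetric block is a $(\frac{n}{2}+1)\times(\frac{n}{2}+1)$ symmetric tridiagonal matrix whose last two off-diagonal entries are both equal to $\sqrt 2\,\ce_{n/2}$ (the $\sqrt 2$ arising from the normalization of the middle vector). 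Since the determinant of a tridiagonal matrix is invariant under a reciprocal rescaling of super- and sub-diagonals, this symmetric block has the same determinant as the (asymmetric) tridiagonal matrix with $\ce_{n/2}$ above the diagonal and $2\ce_{n/2}$ below, which is exactly the larger factor in the statement.

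The main subtlety that distinguishes this lemma from Lemma~\ref{le:Apm} is that the $\pm\ce_0$ contributions of $B_\pm$ lie on the diagonal rather than in anti-diagonal corners; they therefore commute with $J$ and are preserved with the \emph{same} sign in both factors, as opposed to being split between a $t+\ce_0$ factor and a $t-\ce_0$ factor as in Lemma~\ref{le:Apm}. With this point clarified, the remaining work is a routine but tedious verification parallel to that of Lemma~\ref{le:Apm}, and the full bookkeeping can be recorded in the companion note \cite{IMN_lemmas}.
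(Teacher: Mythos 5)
Your proposal is correct and follows essentially the same route as the paper: the matrices in \eqref{eq:Bpm} are already centrosymmetric under the palindromic hypothesis, and the factorization is Aitken's determinant identity for centrosymmetric matrices (the paper cites \cite[p.~125, eq.~(9)]{Aitken} and defers details to \cite{IMN_lemmas}). Your explicit block-diagonalization, the identification of the $\sqrt2\,\ce_{n/2}$ couplings in the odd case, and the remark that the diagonal placement of $\pm\ce_0$ is what makes the preliminary conjugation of Lemma~\ref{le:Apm} unnecessary are all accurate elaborations of that same argument.
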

\begin{proof}
The matrices in \eqref{eq:Bpm} are centrosymmetric. Again, applying the factorization given in \cite[p.~125, eq. (9)]{Aitken}, the result follows. (See \cite{IMN_lemmas} for the full details of the proof.) 
\end{proof}

Now we prove the main theorem of this section.  We use the notation $\conv{A \cup B}$ to denote the convex hull of the sets $A$ and $B$.

\begin{theorem}\label{main}
For $n \in \N$, let 
$a$ and  $c$  be $n+1$-periodic infinite sequences in $\mathcal \R^{\N_0}$. Let
$a_0 a_1 a_2 \cdots a_n$  and $c_0 c_1 c_2 \cdots c_n$ be the period words of $a$ and $c$, respectively.
Suppose that 
\[c_0=a_1, \quad |c_1+a_2| =|c_n+a_0|, \quad |c_1-a_2|=|c_n-a_0|,\]
\begin{equation*}
|c_j+a_{j+1}| = |c_{n-j+1}+a_{n-j+2}|, \quad \text{ and } \quad |c_j-a_{j+1}|  =|c_{n-j+1}-a_{n-j+2}|,
\end{equation*} 
for all $j=2, 3, \dots \left[\frac{n}{2}\right]$.

Then 
\[
    \ovl{W(T(a, 0,c))}=\conv{W(A\sp{+})\cup W(A\sp{-}}),   \]
where $A^{\pm}$ are the $(n+1)\times (n+1)$ matrices
\[
A^{\pm}:=\begin{pmatrix}
\pm a_1 & c_1 & 0 & 0 & \cdots & 0 & 0 & 0\\
a_2 & 0 & c_2 & 0 & \cdots & 0 & 0 & 0\\
0 & a_3 & 0 & c_3 & \cdots & 0 & 0 & 0\\
0 & 0 & a_4 & 0 & \cdots & 0 & 0 & 0\\
\vdots & \vdots & \vdots & \vdots &  \ddots &\vdots & \vdots & \vdots \\
0 & 0 & 0 & 0 & \cdots & 0 & c_{n-1} & 0 \\
0 & 0 & 0 & 0 & \cdots & a_n & 0 & c_n \\
0 & 0 & 0 & 0 & \cdots & 0 & a_0 & \pm c_0
\end{pmatrix}.
\]
\end{theorem}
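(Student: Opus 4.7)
The plan is to reduce the theorem to a single polynomial identity,
\[
P(t,x,y) \;=\; F_{A^+}(t,x,y)\, F_{A^-}(t,x,y),
\]
where $F_M(t,x,y):=\det(tI+x\Re(M)+y\Im(M))$ denotes the Kippenhahn polynomial of a matrix $M$. Once this identity is available, the theorem is immediate: Proposition~\ref{prop:defP} identifies the support function of $\overline{W(T(a,0,c))}$ in direction $e^{i\theta}$ with the largest real root of $t\mapsto P(t,-\cos\theta,-\sin\theta)$, while the Kippenhahn theory recalled in Section~1 identifies the support function of $\overline{\conv{W(A^+)\cup W(A^-)}}$ with the maximum of the largest real roots of $t\mapsto F_{A^\pm}(t,-\cos\theta,-\sin\theta)$. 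A product of polynomials has largest real root equal to the maximum over its factors, so the identity makes the two support functions agree, and closed convex subsets of $\C$ with the same support function coincide.

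For the identity itself I would first unpack $F_{A^\pm}$. Since $a$ and $c$ are real, each $\alpha_j$ is real and each $\gamma_j$ is purely imaginary, so $|\alpha_jx+\gamma_jy|^2=\alpha_j^2x^2+|\gamma_j|^2y^2$ is a polynomial. The matrix $tI+x\Re(A^\pm)+y\Im(A^\pm)$ is tridiagonal with diagonal $(t\pm a_1x,\,t,\dots,t,\,t\pm a_1x)$ (using $c_0=a_1$) and, at each off-diagonal position, two entries whose product is $|\alpha_jx+\gamma_jy|^2$. A tridiagonal determinant depends only on such adjacent products, so $F_{A^\pm}$ is the determinant of a matrix of the type in Lemma~\ref{le:Bpm} with $\ce_0:=a_1x$ and $\ce_j^2:=|\alpha_jx+\gamma_jy|^2$. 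The palindromic hypothesis translates into $\ce_j^2=\ce_{n+1-j}^2$, which is all that lemma's factors (themselves tridiagonal determinants, hence polynomial in the $\ce_j^2$) really see. Lemma~\ref{le:Bpm} then factors $F_{A^\pm}=\Phi^\pm\Psi^\pm$ into two smaller tridiagonal determinants, each carrying $\pm a_1x$ in its $(1,1)$ corner.

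On the other side, I would recognize the two formal factors of
\[
P \;=\; \bigl(G_n-|\alpha_nx+\gamma_ny|^2H_n\bigr)^2-4\prod_{j=0}^n|\alpha_jx+\gamma_jy|^2,
\]
namely $G_n-|\alpha_nx+\gamma_ny|^2H_n\mp2\prod_{j=0}^n|\alpha_jx+\gamma_jy|$, as (formally, since the product of moduli need not be polynomial) the determinants of two $(n+1)\times(n+1)$ cyclic almost-tridiagonal matrices of the type treated by Lemma~\ref{le:laplace}. Their corner entries $u_{1,n+1},\,u_{n+1,1}$ are chosen so that $u_{1,n+1}u_{n+1,1}=|\alpha_nx+\gamma_ny|^2$ (recovering $G_n-|\alpha_nx+\gamma_ny|^2H_n$ from Lemma~\ref{le:laplace}), and so that the two remaining corner-product terms of Lemma~\ref{le:laplace} account for the $\pm2\prod_{j=0}^n|\alpha_jx+\gamma_jy|$; the hypothesis $c_0=a_1$, which forces $\gamma_0=0$, is what makes this bookkeeping consistent. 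Each of these cyclic matrices is of the form in Lemma~\ref{le:Apm}; under the palindromic condition that lemma factors it as a product of two smaller tridiagonal determinants whose sizes and corner entries match those of $\Phi^\pm$ and $\Psi^\pm$ produced above. Matching the factors completes the polynomial identity.

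The main obstacle is this matching step. Signs, complex conjugations, and the parity of $n+1$ (the middle off-diagonal entry is handled differently in Lemmas~\ref{le:Apm} and~\ref{le:Bpm}) all have to align, so the cases $n+1$ odd and $n+1$ even should likely be treated separately. The identification ``$\ce_j=|\alpha_jx+\gamma_jy|$'' is formal, because a modulus need not be a polynomial, but since every determinant in sight depends only on $\ce_j^2$, the arithmetic can be carried out consistently at the level of $\ce_j^2$.
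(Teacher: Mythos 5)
Your proposal follows the paper's proof essentially step for step: reduce everything to the polynomial identity $P=F_{A^+}F_{A^-}$, realize the two factors of $P$ as cyclic almost-tridiagonal determinants via Lemma~\ref{le:laplace}, split each with Lemma~\ref{le:Apm}, and recombine the resulting pieces into $F_{A^\pm}$ with Lemma~\ref{le:Bpm}, treating the parities of $n+1$ separately and working throughout with $\ce_j^2$ so that the moduli never need to be polynomials. The ``matching step'' you flag as the main remaining obstacle is precisely the two-case computation the paper carries out explicitly, so your plan is correct and coincides with the published argument.
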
 
  
\begin{proof}
By Proposition~\ref{prop:defP},
  \[
  \sup \left\{\Re(e^{-i \theta} z)\colon\right. \bigl. z \in W\left(T(a, 0,c)\right)\bigr\} 
   =\max \{ t \in {\mathbb R}\colon P(t, -\cos \theta, -\sin \theta) =0 \},
  \]
  for each $\theta \in [0, 2\pi)$, where
  \begin{equation*}
P(t, x, y) = \left( G_n(t, x, y) - |\alpha_n x +\gamma_n y|^2 H_n(t, x, y)\right)^2 -4 { \prod_{j=0}^{n}} \left| \alpha_j x +\gamma_j y \right|^2,
\end{equation*}
where $G_n(t,x,y)$ and $H_n(t,x,y)$ are as in the mentioned proposition.

Let $A:=\begin{pmatrix} A^+ & 0 \\ 0 & A^-\end{pmatrix}$, then clearly $F_A(t,x,y)=F_{A^+}(t,x,y) \cdot F_{A^-}(t,x,y)$. We will show that $F_A(t,x,y)$ equals $P(t,x,y)$ and hence 
\[
\ovl{W(T(a,0,c))}=W(A).
\]
But since $A$ is block-diagonal, we also have
\[
W(A)=\conv{W(A^+) \cup W(A^-)},
\]
which will complete the proof.

To achieve this, we will find a suitable factorization of $P(t,x,y)$ and then we will observe that one of the factors is $F_{A^+}(t,x,y)$ and the other is $ F_{A^-}(t,x,y)$.

By the hypotheses, we have 
\begin{equation}\label{eq:hypothesis}
|\alpha_j|=|\alpha_{n-j+1}|, \quad \text{ and } \quad
|\gamma_j|=|\gamma_{n-j+1}|, 
\end{equation}
for $j=1, 2, 3, \dots, \left[ \frac{n}{2} \right]$. It is also clear that  $\ovl{\lambda_{j,j+1}}=\lambda_{j+1,j}$ for every $j$.

Since $x$ and $y$ are real variables and by hypothesis $c_0=a_1$, we have $\gamma_0=0$ and so $\lambda_{1,2}=\alpha_0x=\lambda_{2,1}$, it follows that
\begin{equation*}
G_n(t,x,y)=
    \det\begin{pmatrix}
    \lambda_{1,1} & \lambda_{1,2} & 0 & \dots & 0 & 0 & 0\\
    \lambda_{1,2} & \lambda_{2,2} & |\lambda_{2,3}| & \dots & 0 & 0 & 0\\
    0 & |\lambda_{2,3}| & \lambda_{3,3} & \dots & 0 & 0 & 0\\
    \vdots & \vdots & \vdots & \ddots & \vdots & \vdots & \vdots \\
    0 & 0 & 0 & \dots & \lambda_{n-1,n-1} & |\lambda_{n-1,n}| & 0 \\
    0 & 0 & 0 & \dots & |\lambda_{n-1,n}| & \lambda_{n,n} & |\lambda_{n,n+1}| \\    
    0 & 0 & 0 & \dots & 0 & |\lambda_{n,n+1}| & \lambda_{n+1,n+1}
    \end{pmatrix}
\end{equation*}
and, when $n+1\geq 3$,
\begin{equation*}
H_n(t,x,y)=
    \det\begin{pmatrix}
    \lambda_{2,2} & |\lambda_{2,3}| & \dots & 0 & 0 \\
    |\lambda_{2,3}| & \lambda_{3,3} & \dots & 0 & 0 \\
    \vdots & \vdots & \ddots & \vdots & \vdots \\
    0 & 0 & \dots & \lambda_{n-1,n-1} & |\lambda_{n-1,n}| \\
    0 & 0 & \dots & |\lambda_{n-1,n}| & \lambda_{n,n}
    \end{pmatrix}.
\end{equation*}

In order to simplify notation, let us set $\ce_0=\lambda_{1,2}=\alpha_0 x$, set $\ce_j=|\lambda_{j+1,j+2}|=|\alpha_j x + \gamma_j y|$ for $j=1, 2, \dots, n-1$, and set $\ce_n=|\alpha_n x + \gamma_n y|$. 

Observe that, since $\alpha_j$ is real and $\gamma_j$ is purely imaginary, then it follows by equation \eqref{eq:hypothesis} that
\[
r_j^2 = |\alpha_j|^2 x^2 + |\gamma_j|^2 y ^2 = |\alpha_{n-j+1}|^2 x^2 + |\gamma_{n-j+1}|^2 y ^2 = r_{n-j+1}^2,
\]
and hence $r_j=r_{n-j+1}$ for $j=1, 2, 3, \dots, \left[\frac{n}{2} \right]$.

We then obtain
\begin{equation*}
G_n(t,x,y)=
    \det\begin{pmatrix}
    t & \ce_0 & 0 & \dots & 0 & 0 & 0\\
    \ce_0 & t & \ce_1 & \dots & 0 & 0 & 0\\
    0 & \ce_1 & t & \dots & 0 & 0 & 0\\
    \vdots & \vdots & \vdots & \ddots & \vdots & \vdots & \vdots \\
    0 & 0 & 0 & \dots & t  & \ce_{n-2} & 0 \\
    0 & 0 & 0 & \dots & \ce_{n-2} & t & \ce_{n-1} \\    
    0 & 0 & 0 & \dots & 0 & \ce_{n-1} & t
    \end{pmatrix}
\end{equation*}
and, when $n+1\geq 3$,
\begin{equation*}
H_n(t,x,y)=
    \det\begin{pmatrix}
    t & \ce_1 & \dots & 0 & 0 \\
    \ce_1 & t & \dots & 0 & 0 \\
    \vdots & \vdots & \ddots & \vdots & \vdots \\
    0 & 0 & \dots & t & \ce_{n-2}\\
    0 & 0 & \dots & \ce_{n-2}& t \\
    \end{pmatrix}.
\end{equation*}

We then have
\begin{align*}
P(t,x,y)&=\left(G_n(t,x,y)-\ce_n^2 H_n(t,x,y)\right)^2 - 4 \prod_{j=0}^n \ce_j^2 \\
&=\left(G_n(t,x,y)-\ce_n^2 H_n(t,x,y)  - 2 \prod_{j=0}^n \ce_j\right) \left(G_n(t,x,y)-\ce_n^2 H_n(t,x,y) + 2 \prod_{j=0}^n \ce_j\right).
\end{align*}

We assume, for the moment, that $n+1\geq 3$. Then $F_A(t,x,y)$ equals, by Lemma~\ref{le:laplace},
\begin{equation}
\det\begin{pmatrix} \label{eq:FA}
    t & \ce_0 & 0 & \dots & 0 & 0 & \ce_n \\
    \ce_0 & t & \ce_1 & \dots & 0 & 0 & 0\\
    0 & \ce_1 & t & \dots & 0 & 0 & 0\\
    \vdots & \vdots & \vdots & \ddots & \vdots & \vdots & \vdots \\
    0 & 0 & 0 & \dots & t  & \ce_{n-2} & 0 \\
    0 & 0 & 0 & \dots & \ce_{n-2} & t & \ce_{n-1} \\    
    \ce_n & 0 & 0 & \dots & 0 & \ce_{n-1} & t
    \end{pmatrix}
    \det\begin{pmatrix}
    t & \ce_0 & 0 & \dots & 0 & 0 & -\ce_n\\
    \ce_0 & t & \ce_1 & \dots & 0 & 0 & 0\\
    0 & \ce_1 & t & \dots & 0 & 0 & 0\\
    \vdots & \vdots & \vdots & \ddots & \vdots & \vdots & \vdots \\
    0 & 0 & 0 & \dots & t  & \ce_{n-2} & 0 \\
    0 & 0 & 0 & \dots & \ce_{n-2} & t & \ce_{n-1} \\    
    -\ce_n & 0 & 0 & \dots & 0 & \ce_{n-1} & t
    \end{pmatrix}.
\end{equation}
We need to consider two cases:

{\bf Case $n+1$ is even.} By Lemma \ref{le:Apm}, since $r_j=r_{n-j+1}$ for $j=1, 2, 3, \dots, \frac{n-1}{2}$, the above product \eqref{eq:FA} equals

\begin{equation*}
\begin{split}
\det\begin{pmatrix}
t + \ce_0 & \ce_1 & 0 & \dots & 0 & 0 & 0 \\
\ce_1 & t & \ce_2 & \dots & 0 & 0 & 0 &  \\
0 & \ce_2 & t & \dots  & 0  & 0 & 0  \\
\vdots & \vdots & \vdots & \ddots & \vdots & \vdots & \vdots \\
0 & 0 & 0 & \dots & t & \ce_{\frac{n+1}{2}-2} & 0 & \\
0 & 0 & 0 & \dots & \ce_{\frac{n+1}{2}-2} & t  &   \ce_{\frac{n+1}{2}-1} \\
0 & 0 & 0 & \dots & 0 &  \ce_{\frac{n+1}{2}-1} & t+ \ce_{\frac{n+1}{2}} \\
\end{pmatrix}
\det\begin{pmatrix}
t - \ce_0 & \ce_1 & 0 & \dots & 0 & 0 & 0 \\
\ce_1 & t & \ce_2 & \dots & 0 & 0 & 0 &  \\
0 & \ce_2 & t & \dots  & 0  & 0 & 0  \\
\vdots & \vdots & \vdots & \ddots & \vdots & \vdots & \vdots \\
0 & 0 & 0 & \dots & t & \ce_{\frac{n+1}{2}-2} & 0 & \\
0 & 0 & 0 & \dots & \ce_{\frac{n+1}{2}-2} & t  &   \ce_{\frac{n+1}{2}-1} \\
0 & 0 & 0 & \dots & 0 &  \ce_{\frac{n+1}{2}-1} & t- \ce_{\frac{n+1}{2}} \\
\end{pmatrix}
\\
\det\begin{pmatrix}
t - \ce_0 & \ce_1 & 0 & \dots & 0 & 0 & 0 \\
\ce_1 & t & \ce_2 & \dots & 0 & 0 & 0 &  \\
0 & \ce_2 & t & \dots  & 0  & 0 & 0  \\
\vdots & \vdots & \vdots & \ddots & \vdots & \vdots & \vdots \\
0 & 0 & 0 & \dots & t & \ce_{\frac{n+1}{2}-2} & 0 & \\
0 & 0 & 0 & \dots & \ce_{\frac{n+1}{2}-2} & t  &   \ce_{\frac{n+1}{2}-1} \\
0 & 0 & 0 & \dots & 0 &  \ce_{\frac{n+1}{2}-1} & t+ \ce_{\frac{n+1}{2}}
\end{pmatrix}
\det\begin{pmatrix}
t + \ce_0 & \ce_1 & 0 & \dots & 0 & 0 & 0 \\
\ce_1 & t & \ce_2 & \dots & 0 & 0 & 0 &  \\
0 & \ce_2 & t & \dots  & 0  & 0 & 0  \\
\vdots & \vdots & \vdots & \ddots & \vdots & \vdots & \vdots \\
0 & 0 & 0 & \dots & t & \ce_{\frac{n+1}{2}-2} & 0 & \\
0 & 0 & 0 & \dots & \ce_{\frac{n+1}{2}-2} & t  &   \ce_{\frac{n+1}{2}-1} \\
0 & 0 & 0 & \dots & 0 &  \ce_{\frac{n+1}{2}-1} & t- \ce_{\frac{n+1}{2}}
\end{pmatrix}.
\end{split}
\end{equation*}
By applying Lemma \ref{le:Bpm} to the above equality, we conclude that $P(t,x,y)$ equals
\begin{equation*}
\det\begin{pmatrix}
t + \ce_0 & \ce_1 & 0 & 0 & 0 & 0 & \cdots &0  & 0 & 0 \\
\ce_1 & t & \ce_2 & 0 & 0 & 0 & \cdots &0  & 0 & 0 \\
0 & \ce_2 & t & \ce_3 & 0  & 0 & \cdots &0  & 0 & 0 \\
0 & 0 & \ce_3 & t & \ce_4 & 0 & \cdots &0  & 0 & 0 \\
0 & 0 & 0 & \ce_4 & t & \ce_5 & \cdots &0  & 0 & 0 \\
0 & 0 & 0 & 0 & \ce_5 & t & \cdots &0  & 0 & 0 \\
\vdots & \vdots & \vdots & \vdots & \vdots & \vdots & \ddots & \vdots & \vdots & \vdots \\
0 & 0 & 0 & 0 & 0 & 0  & \cdots & t & \ce_{n-1} & 0 \\
0 & 0 & 0 & 0 & 0 & 0  & \cdots & \ce_{n-1 }& t & \ce_{n} \\
0 & 0 & 0 & 0 & 0 & 0  & \cdots & 0 & \ce_{n} & t + \ce_0 
\end{pmatrix} 
\det\begin{pmatrix}
t - \ce_0 & \ce_1 & 0 & 0 & 0 & 0 & \cdots &0  & 0 & 0 \\
\ce_1 & t & \ce_2 & 0 & 0 & 0 & \cdots &0  & 0 & 0 \\
0 & \ce_2 & t & \ce_3 & 0  & 0 & \cdots &0  & 0 & 0 \\
0 & 0 & \ce_3 & t & \ce_4 & 0 & \cdots &0  & 0 & 0 \\
0 & 0 & 0 & \ce_4 & t & \ce_5 & \cdots &0  & 0 & 0 \\
0 & 0 & 0 & 0 & \ce_5 & t & \cdots &0  & 0 & 0 \\
\vdots & \vdots & \vdots & \vdots & \vdots & \vdots & \ddots & \vdots & \vdots & \vdots \\
0 & 0 & 0 & 0 & 0 & 0  & \cdots & t & \ce_{n-1} & 0 \\
0 & 0 & 0 & 0 & 0 & 0  & \cdots & \ce_{n-1 }& t & \ce_{n} \\
0 & 0 & 0 & 0 & 0 & 0  & \cdots & 0 & \ce_{n} & t -\ce_0 
\end{pmatrix}.
\end{equation*}

{\bf Case $n+1$ is odd.}  By Lemma \ref{le:Apm}, since $r_j=r_{n-j+1}$ for $j=1, 2, 3, \dots, \frac{n}{2}$, the product \eqref{eq:FA} equals
\begin{equation}\label{eq:n+1odd}
\begin{split}
\det\begin{pmatrix}
t+\ce_0 & \ce_1 & 0 & \dots & 0 & 0 & 0 \\
\ce_1 & t & \ce_2 & \dots & 0 & 0 & 0 &  \\
0 & \ce_2 & t & \dots  & 0  & 0 & 0  \\
\vdots & \vdots & \vdots & \ddots & \vdots & \vdots & \vdots \\
0 & 0 & 0 & \dots & t & \ce_{\frac{n}{2}-1} & 0 & \\
0 & 0 & 0 & \dots & \ce_{\frac{n}{2}-1} & t  &   \ce_{\frac{n}{2}} \\
0 & 0 & 0 & \dots & 0 & 2 \ce_{\frac{n}{2}} & t \\
\end{pmatrix}
\det\begin{pmatrix}
t-\ce_0 & \ce_{1} & 0 & \cdots & 0 & 0 & 0 \\
\ce_{1} & t & \ce_{2} & \cdots & 0 & 0 & 0 \\
0 &  \ce_{2} & t & \cdots  & 0 & 0 & 0 \\
\vdots & \vdots & \ddots & \vdots & \vdots & \vdots \\
0 & 0 & 0 & \cdots & t & \ce_{\frac{n}{2}-2} & 0 \\
0 & 0 & 0 & \cdots & \ce_{\frac{n}{2}-2}  & t & \ce_{\frac{n}{2}-1} \\
0 & 0 & 0 & \cdots & 0 & \ce_{\frac{n}{2}-1} & t 
\end{pmatrix}
\\
\det\begin{pmatrix}
t-\ce_0 & \ce_1 & 0 & \dots & 0 & 0 & 0 \\
\ce_1 & t & \ce_2 & \dots & 0 & 0 & 0 &  \\
0 & \ce_2 & t & \dots  & 0  & 0 & 0  \\
\vdots & \vdots & \vdots & \ddots & \vdots & \vdots & \vdots \\
0 & 0 & 0 & \dots & t & \ce_{\frac{n}{2}-1} & 0 & \\
0 & 0 & 0 & \dots & \ce_{\frac{n}{2}-1} & t  &   \ce_{\frac{n}{2}} \\
0 & 0 & 0 & \dots & 0 & 2 \ce_{\frac{n}{2}} & t \\
\end{pmatrix}
\det\begin{pmatrix}
t+\ce_0 & \ce_{1} & 0 & \cdots & 0 & 0 & 0 \\
\ce_{1} & t & \ce_{2} & \cdots & 0 & 0 & 0 \\
0 &  \ce_{2} & t & \cdots  & 0 & 0 & 0 \\
\vdots & \vdots & \ddots & \vdots & \vdots & \vdots \\
0 & 0 & 0 & \cdots & t & \ce_{\frac{n}{2}-2} & 0 \\
0 & 0 & 0 & \cdots & \ce_{\frac{n}{2}-2}  & t & \ce_{\frac{n}{2}-1} \\
0 & 0 & 0 & \cdots & 0 & \ce_{\frac{n}{2}-1} & t 
\end{pmatrix}.
\end{split}
\end{equation}

By applying Lemma \ref{le:Bpm} to the above equality, we conclude that $P(t,x,y)$ equals
\[
\det\begin{pmatrix}
t +\ce_0 & \ce_1 & 0 & 0 &  \cdots &0  & 0 & 0 \\
\ce_1 & t & \ce_2 & 0 &  \cdots &0  & 0 & 0 \\
0 & \ce_2 & t & \ce_3 &\cdots &0  & 0 & 0 \\
0 & 0 & \ce_3 & t & \cdots &0  & 0 & 0 \\
\vdots & \vdots & \vdots & \vdots & \ddots & \vdots & \vdots & \vdots \\
0 & 0 & 0 & 0  & \cdots & t & \ce_{n-1} & 0 \\
0 & 0 & 0 & 0  & \cdots & \ce_{n-1 }& t & \ce_{n} \\
0 & 0 & 0 & 0  & \cdots & 0 & \ce_{n} & t +  \ce_0 
\end{pmatrix}
\det\begin{pmatrix}
t - \ce_0 & \ce_1 & 0 & 0 & \cdots &0  & 0 & 0 \\
\ce_1 & t & \ce_2 & 0 & \cdots &0  & 0 & 0 \\
0 & \ce_2 & t & \ce_3 & \cdots &0  & 0 & 0 \\
0 & 0 & \ce_3 & t & \cdots &0  & 0 & 0 \\
\vdots & \vdots & \vdots & \vdots & \ddots & \vdots & \vdots & \vdots \\
0 & 0 & 0 & 0  & \cdots & t & \ce_{n-1} & 0 \\
0 & 0 & 0 & 0  & \cdots & \ce_{n-1 }& t & \ce_{n} \\
0 & 0 & 0 & 0  & \cdots & 0 & \ce_{n} & t - \ce_0 \end{pmatrix}. 
\]

Lastly, if $n+1=2$, then it is straightforward to check that 
\[
P(t,x,y)=\det \begin{pmatrix} t+\ce_0 & \ce_1 \\
\ce_1 & t+\ce_0
\end{pmatrix} 
\det \begin{pmatrix} t-\ce_0 & \ce_1 \\
\ce_1 & t-\ce_0
\end{pmatrix}.
\]
Therefore, for all $n \in \N$ the polynomial $P(t,x,y)$ has the same factorization.

Now, a straightforward calculation shows that
\[
\Re(A^\pm)=
\begin{pmatrix}
\pm a_1 & \alpha_1 & 0 & 0 & \cdots & 0 & 0 & 0\\
\alpha_1 & 0 & \alpha_2 & 0 & \cdots & 0 & 0 & 0\\
0 & \alpha_2 & 0 & \alpha_3 & \cdots & 0 & 0 & 0\\
0 & 0 & \alpha_3 & 0 & \cdots & 0 & 0 & 0\\
\vdots & \vdots & \vdots & \vdots &  \ddots &\vdots & \vdots & \vdots \\
0 & 0 & 0 & 0 & \cdots & 0 & \alpha_{n-1} & 0 \\
0 & 0 & 0 & 0 & \cdots & \alpha_{n-1} & 0 & \alpha_n \\
0 & 0 & 0 & 0 & \cdots & 0 & \alpha_n & \pm c_0
\end{pmatrix}
\]
and
\[
\Im(A^\pm)=
\begin{pmatrix}
0 & \gamma_1 & 0 & 0 & \cdots & 0 & 0 & 0\\
-\gamma_1 & 0 & \gamma_2 & 0 & \cdots & 0 & 0 & 0\\
0 & -\gamma_2 & 0 & \gamma_3 & \cdots & 0 & 0 & 0\\
0 & 0 & -\gamma_3 & 0 & \cdots & 0 & 0 & 0\\
\vdots & \vdots & \vdots & \vdots &  \ddots &\vdots & \vdots & \vdots \\
0 & 0 & 0 & 0 & \cdots & 0 & \gamma_{n-1} & 0 \\
0 & 0 & 0 & 0 & \cdots & -\gamma_{n-1} & 0 & -\gamma_n \\
0 & 0 & 0 & 0 & \cdots & 0 & \gamma_n & 0
\end{pmatrix}.
\]
It follows that  $
F_{A\sp\pm}(t, x, y)={\rm det}(t I_{n+1} +x \Re(A\sp\pm) +y \Im(A\sp\pm))$ equals
\[
\det\begin{pmatrix}
t \pm\ce_0 & \lambda_{2,3} & 0 & \cdots &0  & 0 & 0 \\
\lambda_{3,2} & t & \lambda_{3,4} & \cdots &0  & 0 & 0 \\
0 & \lambda_{4,3} & t & \cdots &0  & 0 & 0 \\
\vdots & \vdots & \vdots & \ddots & \vdots & \vdots & \vdots \\
0 & 0 & 0  & \cdots & t & \lambda_{n,n+1} & 0 \\
0 & 0 & 0  & \cdots & \lambda_{n+1,n }& t & \lambda_{n+1,n+2}  \\
0 & 0 & 0  & \cdots & 0 & \lambda_{n+2,n+1} & t \pm  \ce_0 
\end{pmatrix}.
\]
Hence,
\[
P(t,x,y)=F_{A^+}(t,x,y) \cdot F_{A^-}(t,x,y)=F_A(t,x,y),
\]
as desired.
\end{proof}

The same type of conclusion given in the above theorem can be deduced if, instead of assuming $c_0=a_1$ (and the rest of the equalities) one assumes, for example, that $c_1=a_2$ (and other, similar, equalities). 

We illustrate the use of the above theorem with a couple of examples.  

\begin{example}
Let $a$ be the periodic sequence with period word $0100\cdots0$ of length $n+1$ and let $c$ be the periodic sequence  with period word $1111\cdots1$ of length $n+1$. Then, we have $1=c_0=a_1$, $1=|c_1+a_2|=|c_n+a_0|$, $1=|c_1-a_2|=|c_n-a_0|$,  $1=|c_j+a_{j+1}|=|c_{n-j+1}+a_{n-j+2}|$ and $1=|c_j-a_{j+1}|=|c_{n-j+1}-a_{n-j+2}|$  for all $j=2, 3, \dots \left[\frac{n}{2}\right]$. Therefore, the closure of the numerical range of $T(a,0,c)$ is the convex hull of the numerical ranges of the $(n+1)\times(n+1)$ matrices
\[
A^{\pm}=\begin{pmatrix}
\pm 1 & 1 & 0 & 0 & \cdots & 0 & 0 & 0\\
0 & 0 & 1 & 0 & \cdots & 0 & 0 & 0\\
0 & 0 & 0 & 1 & \cdots & 0 & 0 & 0\\
0 & 0 & 0 & 0 & \cdots & 0 & 0 & 0\\
\vdots & \vdots & \vdots & \vdots &  \ddots &\vdots & \vdots & \vdots \\
0 & 0 & 0 & 0 & \cdots & 0 & 1 & 0 \\
0 & 0 & 0 & 0 & \cdots & 0 & 0 & 1 \\
0 & 0 & 0 & 0 & \cdots & 0 & 0 & \pm 1
\end{pmatrix}.
\]
Let $a'$ be the periodic sequence with period word $000\cdots01$ of length $n+1$. By Proposition 1.1 in \cite{IM}, we have
\[
\ovl{W(T(a',0,c))}=\ovl{W(A_{a'})},
\]
where $A_{a'}$ is the biinfinite matrix in \eqref{eq:Ab}. But since the matrices $A_{a'}$ and $A_{a}$ are unitarily equivalent, then $W(A_{a'})=W(A_{a})$ (see, for example, \cite[Theorem 3.3]{HI-O2016}). Lastly, again using Proposition 1.1 in \cite{IM}, we have $\ovl{W(A_{a})}=\ovl{W(T(a,0,c))}$, which proves Conjecture 3.7 in \cite{IM}.
\end{example}

The previous example can be generalized as follows.

\begin{example}
Let $T$ be the operator with tridiagonal matrix
\[
\begin{pmatrix}
0 & 1 & 0 &   \\
a_1 & 0 & 1 & 0  \\
0  & a_{2} & 0 & 1 & \ddots \\
 & 0  & a_{3} & 0 & \ddots  & \ddots \\
 & & \ddots & \ddots & \ddots  & \ddots & \ddots \\
 & & & 0 & a_{n-1} & 0 & 1 & 0\\
 & & & & 0 & a_{n} & 0 & 1 & 0\\
 & & & & & 0 & a_{0} & 0 & 1 & 0\\
 & & & & & & 0 & a_1 & 0 & 1 & 0  \\
 &  & & & & & & 0 & a_{2} & 0 & 1 & 0  \\
 & & &  & & & & & & \ddots & \ddots & \ddots & \ddots & & \\
\end{pmatrix}. 
\]
If $a_1=1$, $a_2=a_0$, $a_3=a_n$, $a_4=a_{n-1}$, etc., then Theorem~\ref{main} applies. That is, the closure of the numerical range of $T$ is the convex hull of the numerical ranges of the matrices
\[
A^{\pm}=\begin{pmatrix}
\pm 1 & 1 & 0 & 0 & \cdots & 0 & 0 & 0\\
a_2 & 0 & 1 & 0 & \cdots & 0 & 0 & 0\\
0 & a_3 & 0 & 1 & \cdots & 0 & 0 & 0\\
0 & 0 & a_4 & 0 & \cdots & 0 & 0 & 0\\
\vdots & \vdots & \vdots & \vdots &  \ddots &\vdots & \vdots & \vdots \\
0 & 0 & 0 & 0 & \cdots & 0 & 1 & 0 \\
0 & 0 & 0 & 0 & \cdots & a_n & 0 & 1 \\
0 & 0 & 0 & 0 & \cdots & 0 & a_0 & \pm 1
\end{pmatrix}.
\]
In particular, when the periodic word is of length 2 with $a_0=-1$ and $a_1=1$ then the operator $T$ is a $2$-periodic ``hopping sign'' operator. On the other hand, $T$ is also an operator of the form $A(\infty,-1)$ as defined in \cite{CN2011}. We have that the closure of the numerical range of this $T$ is the convex hull of the numerical ranges of the matrices $A\sp{\pm}=\begin{pmatrix} \pm 1 & 1\\ -1 & \pm 1\end{pmatrix}$, that is, it is the convex hull of the sets $W(A^+)=\{1+yi\colon -1\leq y\leq 1\}$ and $W(A^-)=\{-1+yi\colon -1\leq y\leq 1\}$, thus recovering Theorem~9 in \cite{CN2011}.
\end{example}

\section{Further simplification, if $n+1$ is odd}

For the rest of this section, we assume that $n+1$ is odd. In Theorem~\ref{main} we have written the closure of the numerical range of $T$ as the convex hull of the union of the numerical ranges of two matrices of size $(n+1)\times (n+1)$. Here we simplify this result replacing the matrices by matrices of sizes $(\frac{n}{2}+1)\times(\frac{n}{2}+1)$. Assume that we have obtained the factorization we did in the proof of Theorem~\ref{main}, specifically, we have obtained Equation \eqref{eq:n+1odd}:
\begin{equation*}
\begin{split}
    P(t,x,y)&=
    \det\begin{pmatrix}
t+\ce_0 & \ce_1 & 0 & \dots & 0 & 0 & 0 \\
\ce_1 & t & \ce_2 & \dots & 0 & 0 & 0 &  \\
0 & \ce_2 & t & \dots  & 0  & 0 & 0  \\
\vdots & \vdots & \vdots & \ddots & \vdots & \vdots & \vdots \\
0 & 0 & 0 & \dots & t & \ce_{\frac{n}{2}-1} & 0 & \\
0 & 0 & 0 & \dots & \ce_{\frac{n}{2}-1} & t  &   \ce_{\frac{n}{2}} \\
0 & 0 & 0 & \dots & 0 & 2 \ce_{\frac{n}{2}} & t \\
\end{pmatrix}
\det\begin{pmatrix}
t-\ce_0 & \ce_{1} & 0 & \cdots & 0 & 0 & 0 \\
\ce_{1} & t & \ce_{2} & \cdots & 0 & 0 & 0 \\
0 &  \ce_{2} & t & \cdots  & 0 & 0 & 0 \\
\vdots & \vdots & \ddots & \vdots & \vdots & \vdots \\
0 & 0 & 0 & \cdots & t & \ce_{\frac{n}{2}-2} & 0 \\
0 & 0 & 0 & \cdots & \ce_{\frac{n}{2}-2}  & t & \ce_{\frac{n}{2}-1} \\
0 & 0 & 0 & \cdots & 0 & \ce_{\frac{n}{2}-1} & t 
\end{pmatrix}\\
&\quad
\det\begin{pmatrix}
t-\ce_0 & \ce_1 & 0 & \dots & 0 & 0 & 0 \\
\ce_1 & t & \ce_2 & \dots & 0 & 0 & 0 &  \\
0 & \ce_2 & t & \dots  & 0  & 0 & 0  \\
\vdots & \vdots & \vdots & \ddots & \vdots & \vdots & \vdots \\
0 & 0 & 0 & \dots & t & \ce_{\frac{n}{2}-1} & 0 & \\
0 & 0 & 0 & \dots & \ce_{\frac{n}{2}-1} & t  &   \ce_{\frac{n}{2}} \\
0 & 0 & 0 & \dots & 0 & {2} \ce_{\frac{n}{2}} & t \\
\end{pmatrix}
\det\begin{pmatrix}
t+\ce_0 & \ce_{1} & 0 & \cdots & 0 & 0 & 0 \\
\ce_{1} & t & \ce_{2} & \cdots & 0 & 0 & 0 \\
0 &  \ce_{2} & t & \cdots  & 0 & 0 & 0 \\
\vdots & \vdots & \ddots & \vdots & \vdots & \vdots \\
0 & 0 & 0 & \cdots & t & \ce_{\frac{n}{2}-2} & 0 \\
0 & 0 & 0 & \cdots & \ce_{\frac{n}{2}-2}  & t & \ce_{\frac{n}{2}-1} \\
0 & 0 & 0 & \cdots & 0 & \ce_{\frac{n}{2}-1} & t 
\end{pmatrix}.
\end{split}
\end{equation*}

Substituting back the values of $\lambda_{i,j}$, given in the proof of Theorem~\ref{main}, it now follows that $P(t,x,y)$ equals the product
\begin{equation*}
\begin{split}
& \det\begin{pmatrix}
t+\ce_0 & \lambda_{2,3} & 0 & \dots & 0 & 0 & 0 \\
\lambda_{3,2} & t & \lambda_{3,4} & \dots & 0 & 0 & 0 &  \\
0 & \lambda_{4,3} & t & \dots  & 0  & 0 & 0  \\
\vdots & \vdots & \vdots & \ddots & \vdots & \vdots & \vdots \\
0 & 0 & 0 & \dots & t & \lambda_{\frac{n}{2},\frac{n}{2}+1} & 0 & \\
0 & 0 & 0 & \dots & \lambda_{\frac{n}{2}+1,\frac{n}{2}} & t & \sqrt{2} \lambda_{\frac{n}{2}+1,\frac{n}{2}+2} \\
0 & 0 & 0 & \dots & 0 & \sqrt{2} \lambda_{\frac{n}{2}+2,\frac{n}{2}+1} & t \\
\end{pmatrix}
\det\begin{pmatrix}
t-\ce_0 & \lambda_{2,3} & 0 & \dots & 0 & 0 & 0 \\
\lambda_{3,2} & t & \lambda_{3,4} & \dots & 0 & 0 & 0 &  \\
0 & \lambda_{4,3} & t & \dots  & 0  & 0 & 0  \\
\vdots & \vdots & \vdots & \ddots & \vdots & \vdots & \vdots \\
0 & 0 & 0 & \dots & t & \lambda_{\frac{n}{2}-1,\frac{n}{2}} & 0 & \\
0 & 0 & 0 & \dots & \lambda_{\frac{n}{2},\frac{n}{2}-1} & t & \lambda_{\frac{n}{2},\frac{n}{2}+1} \\
0 & 0 & 0 & \dots & 0 & \lambda_{\frac{n}{2}+1,\frac{n}{2}} & t
\end{pmatrix}\\
\\
& \det\begin{pmatrix}
t-\ce_0 & \lambda_{2,3} & 0 & \dots & 0 & 0 & 0 \\
\lambda_{3,2} & t & \lambda_{3,4} & \dots & 0 & 0 & 0 &  \\
0 & \lambda_{4,3} & t & \dots  & 0  & 0 & 0  \\
\vdots & \vdots & \vdots & \ddots & \vdots & \vdots & \vdots \\
0 & 0 & 0 & \dots & t & \lambda_{\frac{n}{2},\frac{n}{2}+1} & 0 & \\
0 & 0 & 0 & \dots & \lambda_{\frac{n}{2}+1,\frac{n}{2}} & t & \sqrt{2} \lambda_{\frac{n}{2}+1,\frac{n}{2}+2} \\
0 & 0 & 0 & \dots & 0 & \sqrt{2} \lambda_{\frac{n}{2}+2,\frac{n}{2}+1} & t \\
\end{pmatrix}
\det\begin{pmatrix}
t+\ce_0 & \lambda_{2,3} & 0 & \dots & 0 & 0 & 0 \\
\lambda_{3,2} & t & \lambda_{3,4} & \dots & 0 & 0 & 0 &  \\
0 & \lambda_{4,3} & t & \dots  & 0  & 0 & 0  \\
\vdots & \vdots & \vdots & \ddots & \vdots & \vdots & \vdots \\
0 & 0 & 0 & \dots & t & \lambda_{\frac{n}{2}-1,\frac{n}{2}} & 0 & \\
0 & 0 & 0 & \dots & \lambda_{\frac{n}{2},\frac{n}{2}-1} & t & \lambda_{\frac{n}{2},\frac{n}{2}+1} \\
0 & 0 & 0 & \dots & 0 & \lambda_{\frac{n}{2}+1,\frac{n}{2}} & t \\
\end{pmatrix}.
\end{split}
\end{equation*}

Now, consider the $(\frac{n}{2}+1) \times (\frac{n}{2}+1)$ matrices
\[
B^{\pm}= \begin{pmatrix}
\pm a_1 & c_1 & 0 & 0 & \cdots & 0 & 0 & 0\\
a_2 & 0 & c_2 & 0 & \cdots & 0 & 0 & 0\\
0 & a_3 & 0 & c_3 & \cdots & 0 & 0 & 0\\
0 & 0 & a_4 & 0 & \cdots & 0 & 0 & 0\\
\vdots & \vdots & \vdots & \vdots &  \ddots &\vdots & \vdots & \vdots \\
0 & 0 & 0 & 0 & \cdots & 0 & c_{\frac{n}{2}-1} & 0 \\
0 & 0 & 0 & 0 & \cdots & a_{\frac{n}{2}} & 0 & \sqrt{2} c_{\frac{n}{2}} \\
0 & 0 & 0 & 0 & \cdots & 0 & \sqrt{2} a_{\frac{n}{2}+1} & 0
\end{pmatrix}
\]
and its $\frac{n}{2} \times \frac{n}{2}$ submatrices
\[
B_1^{\pm}= \begin{pmatrix}
\pm a_1 & c_1 & 0 & 0 & \cdots & 0 & 0 \\
a_2 & 0 & c_2 & 0 & \cdots & 0 & 0 \\
0 & a_3 & 0 & c_3 & \cdots & 0 & 0 \\
0 & 0 & a_4 & 0 & \cdots & 0 & 0 \\
\vdots & \vdots & \vdots & \vdots &  \ddots &\vdots & \vdots\\
0 & 0 & 0 & 0 & \cdots & 0 & c_{\frac{n}{2}-1} \\
0 & 0 & 0 & 0 & \cdots & a_{\frac{n}{2}} & 0
\end{pmatrix}
\]
Given the factorization above, it is then straightforward to check that
\[
P(t,x,y)=F_{B^+}(t,x,y) \cdot F_{B_1^-}(t,x,y) \cdot F_{B^-}(t,x,y) \cdot F_{B_1^+}(t,x,y),
\]
and hence, the numerical range of the tridiagonal operator $T$ is the convex hull of the numerical ranges of the four matrices $B^+$, $B^-$, $B_1^+$, and $B_1^-$. But, since the matrices $B_1^\pm$ are submatrices of $B^\pm$, the closure of the numerical range of $T$  is the convex hull of the numerical ranges of the two matrices $B^\pm$, which are of size $(\frac{n}{2}+1)\times (\frac{n}{2}+1)$. We have proved the following corollary.

\begin{corollary}\label{cor_main}
For $n \in \N$, with $n+1$ odd, let 
$a$ and  $c$  be $n+1$-periodic infinite sequences in $\mathcal \R^{\N_0}$. Let
$a_0 a_1 a_2 \cdots a_n$  and $c_0 c_1 c_2 \cdots c_n$ be the period words of $a$ and $c$, respectively.
Suppose that 
\[
c_0=a_1, \quad |c_1+a_2| =|c_n+a_0|, \quad |c_1-a_2|=|c_n-a_0|,
\]
\begin{equation*}
|c_j+a_{j+1}| = |c_{n-j+1}+a_{n-j+2}|, \quad \text{ and } \quad |c_j-a_{j+1}|  =|c_{n-j+1}-a_{n-j+2}|,
\end{equation*} 
for all $j=2, 3, \dots \frac{n}{2}$. Then 
\[
    \ovl{W(T(a, 0,c))}=\conv{W(B\sp{+})\cup W(B\sp{-}}),   \]
    where $B^{\pm}$ are the $(\frac{n}{2}+1) \times (\frac{n}{2}+1)$ matrices
\[
B^{\pm}= \begin{pmatrix}
\pm a_1 & c_1 & 0 & 0 & \cdots & 0 & 0 & 0\\
a_2 & 0 & c_2 & 0 & \cdots & 0 & 0 & 0\\
0 & a_3 & 0 & c_3 & \cdots & 0 & 0 & 0\\
0 & 0 & a_4 & 0 & \cdots & 0 & 0 & 0\\
\vdots & \vdots & \vdots & \vdots &  \ddots &\vdots & \vdots & \vdots \\
0 & 0 & 0 & 0 & \cdots & 0 & c_{\frac{n}{2}-1} & 0 \\
0 & 0 & 0 & 0 & \cdots & a_{\frac{n}{2}} & 0 & \sqrt{2} c_{\frac{n}{2}} \\
0 & 0 & 0 & 0 & \cdots & 0 & \sqrt{2} a_{\frac{n}{2}+1} & 0
\end{pmatrix}.
\]
\end{corollary}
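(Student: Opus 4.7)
The plan is to piggyback on the factorization already established inside the proof of Theorem~\ref{main} for the odd case, and then recognize the four factors as Kippenhahn polynomials of four explicit matrices.

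First, I would invoke Equation~\eqref{eq:n+1odd} together with the subsequent substitution (already displayed in the excerpt) that re-expresses $P(t,x,y)$ as a product of four determinants written in terms of the entries $\lambda_{i,j}$ of Proposition~\ref{prop:defP}. The key observation is that each of these four determinants is the characteristic form $\det(t I + x\Re(M) + y\Im(M))$ of a small tridiagonal matrix $M$: the two ``larger'' factors correspond to $M = B^{+}$ and $M = B^{-}$, where the extra $\sqrt{2}$ in the last superdiagonal entry of $B^{\pm}$ matches the factor of $2$ appearing in the $(n/2{+}1,n/2)$ entry of the relevant determinants (since the symmetrized entries of $\Re(M)$ and $\Im(M)$ each contribute $\sqrt{2}\cdot\sqrt{2} = 2$ upon symmetric expansion). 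The two remaining factors are identified with the Kippenhahn polynomials of the principal submatrices $B_1^{\pm}$ obtained by deleting the last row and column of $B^{\pm}$.

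Next I would conclude the polynomial identity
\[
P(t,x,y) \;=\; F_{B^{+}}(t,x,y)\, F_{B_1^{-}}(t,x,y)\, F_{B^{-}}(t,x,y)\, F_{B_1^{+}}(t,x,y).
\]
Setting $C := B^{+} \oplus B_1^{-} \oplus B^{-} \oplus B_1^{+}$, this is exactly $F_{C}(t,x,y)$, so $C$ and the operator $T(a,0,c)$ have the same Kippenhahn polynomial, and therefore the same numerical range in closure (as explained in Section~1, the Kippenhahn polynomial determines the closure of the numerical range via its largest real roots in $t$, which by Proposition~\ref{prop:defP} agree with the support function of $\ovl{W(T(a,0,c))}$). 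Since the numerical range of a block-diagonal matrix is the convex hull of the numerical ranges of its blocks, this gives
\[
\ovl{W(T(a,0,c))} = \conv{W(B^{+}) \cup W(B^{-}) \cup W(B_1^{+}) \cup W(B_1^{-})}.
\]

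Finally, I would eliminate the two redundant terms by observing that $B_1^{\pm}$ is the leading principal $\frac{n}{2}\times\frac{n}{2}$ submatrix of $B^{\pm}$. Compressing a vector $x \in \C^{n/2}$ to $(x,0) \in \C^{n/2+1}$ shows $W(B_1^{\pm}) \subseteq W(B^{\pm})$, and the convex hull collapses to $\conv{W(B^{+}) \cup W(B^{-})}$, which is the desired conclusion. The main obstacle I expect is purely bookkeeping: carefully matching the symmetrized tridiagonal entries $\lambda_{i,j}$, $\ovl{\lambda_{i,j}}$, $\alpha_j$, $\gamma_j$ to confirm that the $\sqrt{2}$ factors in $B^{\pm}$ are in the right places to yield precisely the factors of $2$ in the relevant $(n/2{+}1)\times(n/2{+}1)$ determinants produced by Lemmas \ref{le:Apm} and \ref{le:Bpm}; everything else is a straightforward consequence of the Kippenhahn-polynomial characterization of the numerical range.
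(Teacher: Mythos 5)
Your proposal is correct and follows essentially the same route as the paper: it reuses the four-factor factorization coming from Equation~\eqref{eq:n+1odd}, identifies the factors as $F_{B^{\pm}}$ and $F_{B_1^{\pm}}$ (with the same $\sqrt{2}$ bookkeeping in the last off-diagonal entries), and then discards $W(B_1^{\pm})$ because $B_1^{\pm}$ is a leading principal submatrix of $B^{\pm}$. The only cosmetic difference is that you make explicit the block-diagonal matrix $C$ and the compression argument giving $W(B_1^{\pm})\subseteq W(B^{\pm})$, which the paper leaves implicit.
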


We illustrate this with an example.

\begin{example}
Let $a$ be the periodic sequence $(0,1,0,0, \dots, 0)$ of length $n+1$ odd and let $c$ be the periodic sequence $(1,1,1,1, \dots, 1)$ of length $n+1$. Then, the closure of the numerical range of $T(a,0,c)$ is the convex hull of the numerical ranges of the $(\frac{n}{2}+1) \times (\frac{n}{2}+1)$ matrices
\[
B^{\pm}=\begin{pmatrix}
\pm 1 & 1 & 0 & 0 & \cdots & 0 & 0 & 0\\
0 & 0 & 1 & 0 & \cdots & 0 & 0 & 0\\
0 & 0 & 0 & 1 & \cdots & 0 & 0 & 0\\
0 & 0 & 0 & 0 & \cdots & 0 & 0 & 0\\
\vdots & \vdots & \vdots & \vdots &  \ddots &\vdots & \vdots & \vdots \\
0 & 0 & 0 & 0 & \cdots & 0 & 1 & 0 \\
0 & 0 & 0 & 0 & \cdots & 0 & 0 & \sqrt{2} \\
0 & 0 & 0 & 0 & \cdots & 0 & 0 & 0
\end{pmatrix}.
\]
Observe that if $n=2$, then $B^{\pm}=\begin{pmatrix} \pm 1 & \sqrt{2} \\ 0 & 0 \end{pmatrix}$. This is the case $n+1=3$ in Conjecture 3.7 in \cite{IM} and illustrates the reason why the numerical range of $W(T(a,0,c))$ is the convex hull of two ellipses, as shown in Figure 2 in \cite{IM}.
\end{example}


\begin{thebibliography}{17}

\bibitem{Aitken}
A.~C.~Aitken.
\newblock
Determinants and Matrices. 
\newblock
Third Edition.
\newblock
Oliver and Boyd, Edinburgh, 1944.

\bibitem{BebianoEtAl}
N.~Bebiano, J.~da~Provid\^encia, and A.~Nata.
\newblock The numerical range of banded periodic {T}oeplitz operators.
\newblock {\em J.~Math.~Anal.~Appl.}, { 398}:189--197, 2013.


\bibitem{CWChLi10}
S.N. Chandler-Wilde, R.~Chonchaiya, and M.~Lindner.
\newblock Eigenvalue problem meets {S}ierpinski triangle: computing the
  spectrum of a non-self-adjoint random operator.
\newblock {\em Oper.~Matrices}, 5:633--648, 2011.

\bibitem{CWChLi13}
S.~N. Chandler-Wilde, R.~Chonchaiya, and M.~Lindner.
\newblock On the spectra and pseudospectra of a class of non-self-adjoint
  random matrices and operators.
\newblock {\em Oper.~Matrices}, 7:739--775, 2013.

\bibitem{ChD}
S.~N. Chandler-Wilde and E.~B. Davies.
\newblock Spectrum of a Feinberg-Zee random hopping matrix.
 {\em J.~Spectr.~Theory}, 2:147--179, 2012.


\bibitem{CN2011} M.~T. Chien and H. Nakazato. The numerical range of a tridiagonal operator. {\em 	J.\ Math.\ Anal.\ Appl.} 
373:297--304, 2011.

\bibitem{CS}
R.T. Chien and I.M. Spitkovsky.
\newblock On the numerical ranges of some tridiagonal matrices.
\newblock {\em Linear Algebra Appl.}, 470:228--240, 2015.

\bibitem{Feinberg}
J.~Feinberg and A.~Zee.
\newblock Spectral curves of non-hermitean {H}amiltonians.
\newblock {\em Nuclear Phys.~B}, 552:599--623, 1999.


\bibitem{HaggerJFA}
R.~Hagger.
\newblock The eigenvalues of tridiagonal sign matrices are dense in the spectra
  of periodic tridiagonal sign operators.
\newblock {\em J.~Funct.~Anal.}, 269:1563--1570, 2015.

\bibitem{Hagger}
R.~Hagger.
\newblock On the spectrum and numerical range of tridiagonal random operators.
\newblock {\em J. Spectr. Theory}, 6:215266, 2016.









\bibitem{HI-O2016}
C.~Hern\'andez-Becerra and B.~A. Itz\'a-Ortiz.
\newblock A class of tridiagonal operators associated to some subshifts.
\newblock {\em Open Math.}, 14:2391--5455, 2016.

\bibitem{IM} B.~A. Itz\'a-Ortiz and R.~A. Mart\'inez-Avenda\~no. 
\newblock The numerical range of a class of periodic tridiagonal operators.
\newblock {\em Linear and Multilinear Algebra}. {\tt DOI:} 10.1080/03081087.2019.1706438, to appear.

\bibitem{IMN} B.~A. Itz\'a-Ortiz, R.~A. Mart\'inez-Avenda\~no and H. Nakazato.
\newblock The numerical range of a periodic tridiagonal operators reduces to the numerical range of a finite matrix.
\newblock Preprint.

\bibitem{IMN_lemmas} B.~A. Itz\'a-Ortiz, R.~A. Mart\'inez-Avenda\~no and H. Nakazato.
\newblock 
An identity involving determinants of centrosymmetric matrices and its application to almost tridiagonal matrices.
\newblock Preprint.

\bibitem{Kato}
T.~Kato.
\newblock {\em Perturbation theory for linear operators}.
\newblock Die Grundlehren der mathematischen Wissenschaften, Band 132.
  Springer-Verlag New York, Inc., New York, 1966.


\bibitem{Kip} R. Kippenhahn,
\newblock  \"Uber den wertevorrat einer Matrix.
\newblock {\em Math. Nachr.},  6:193-228, 1951. 

\bibitem{ZH} 
R. Kippenhahn. \newblock On the numerical range of a matrix. Translated from the German by Paul F. Zachlin and Michiel E. Hochstenbach. \newblock{\em Linear Multilinear Algebra} 56:185--225, 2008.


\end{thebibliography}
\bibliographystyle{plain}

\end{document}